\documentclass[reqno]{amsart}
\usepackage{amssymb,mathtools,lineno,mathrsfs}

\usepackage{ifpdf}
\ifpdf
 \usepackage[hyperindex]{hyperref}
\else
 \expandafter\ifx\csname dvipdfm\endcsname\relax
 \usepackage[hypertex,hyperindex]{hyperref}%
 \else
 \usepackage[dvipdfm,hyperindex]{hyperref}%
 \fi
\fi

\numberwithin{equation}{section}
\allowdisplaybreaks[4]

\theoremstyle{plain}
\newtheorem{thm}{Theorem}[section]
\newtheorem*{thm1st}{Gauss' first summation theorem}
\newtheorem*{thm2nd}{Gauss' second summation theorem}

\theoremstyle{remark}
\newtheorem{rem}{Remark}[section]

\DeclareMathOperator{\te}{e}

\begin{document}

\title[Extensions of four famous combinatorial identities]
{Extensions of several famous combinatorial identities via hypergeometric functions}

\author[A. K. Rathie]{Arjun Kumar Rathie}
\address{Department of Mathematics, Vedant College of Engineering and Technololy, (Rajasthan Technical University), Tulsi, Jakhamund, Bundi Rajasthan State, India}
\email{arjunkumarrathie@gmail.com}
\urladdr{\url{https://orcid.org/0000-0003-3902-3050}}

\author[F. Qi]{Feng Qi}
\address{School of Mathematics and Physics, Hulunbuir University, Hulunbuir 021008, Inner Mongolia, China;
17709 Sabal Court, University Village, Dallas, TX 75252-8024, USA}
\email{\href{mailto: F. Qi<qifeng618@gmail.com>}{qifeng618@gmail.com}}
\urladdr{\url{https://orcid.org/0000-0001-6239-2968}}

\author[D. Lim]{Dongkyu Lim*}
\address{Department of Mathematics Education, Gyeongkuk National University, Andong 36729, Republic of Korea}
\email{dklim@gknu.ac.kr}
\urladdr{\url{https://orcid.org/0000-0002-0928-8480}} 

\begin{abstract}
The objective of this paper is to develop an extension of Kummer's second theorem and to establish generalized forms of four classical combinatorial identities---Knuth's old sum (also known as Reed--Dawson's combinatorial identity), Riordan's combinatorial identity, Gould's combinatorial identity, and Touchard's combinatorial identity---using a hypergeometric-series approach. Several new identities also arise as special cases of our main results.
\end{abstract}

\keywords{Kummer's second theorem, Knuth's old sum, Riordan's combinatorial identity, Gould's combinatorial identity, Touchard's combinatorial identity, Gauss' summation theorem, generalized hypergeometric function, extension}

\subjclass{Primary 05A10; Secondary 05A15, 11B68, 30C05, 33C05, 33C15, 33C20, 40A25}

\thanks{*Corresponding author}

\thanks{This paper was typeset using \AmS-\LaTeX}

\maketitle

\section{Introduction}
In this section, we review the required foundational concepts and outline several pertinent theorems and combinatorial identities.

\subsection{Generalized hypergeometric functions}
As usual, let $\mathbb{C}$ denote the set of complex numbers and $\mathbb{Z}_0^-=\{0, -1, -2, \dotsc\}$. For $\alpha_j \in \mathbb{C}$ and $\beta_j \in \mathbb{C} \setminus \mathbb{Z}_0^-$, the generalized hypergeometric function ${}_pF_q$ with $p$ numerator parameters $\alpha_1,\alpha_2, \dotsc, \alpha_p$ and $q$ denominator parameters $\beta_1, \beta_2,\dotsc, \beta_q$ for $p, q \in \mathbb{N}_0= \{0,1,2,\dotsc\}$ is defined~\cite{23} by
\begin{equation*}
{\,}_pF_q\begin{bmatrix}\begin{matrix}
\alpha_1, \alpha_2,\dotsc, \alpha_p\\
\beta_1, \beta_2,\dotsc, \beta_q
\end{matrix};z\end{bmatrix}
= \sum_{n=0}^{\infty}
\frac{(\alpha_1)_n (\alpha_2)_n \cdots (\alpha_p)_n}
{(\beta_1)_n (\beta_2)_n \cdots (\beta_q)_n}
\frac{z^n}{n!},\quad |z|<1,
\end{equation*}
where $(\alpha)_n$ denotes the Pochhammer symbol (or shifted factorial) which is defined~\cite[p.~72]{Temme-96-book} for $\alpha \in \mathbb{C}$ by
\[
(\alpha)_n=\frac{\Gamma(\alpha+n)}{\Gamma(\alpha)} =
\begin{cases}
1, & n=0;\\
\alpha(\alpha+1)\cdots(\alpha+n - 1), & n \in \mathbb{N},
\end{cases}
\]
and the classical Euler gamma function $\Gamma(z)$ can be defined~\cite[Chapter~3]{Temme-96-book} by
\begin{equation*}
\Gamma(z)=\lim_{n\to\infty}\frac{n!n^z}{\prod_{k=0}^n(z+k)}, \quad z\in\mathbb{C}\setminus\mathbb{Z}_0^-.
\end{equation*}
In particular, ${\,}_2F_1$ is the classical Gauss hypergeometric function~\cite[Chapter~5]{Temme-96-book}.
\par
The theory of the Gauss and generalized hypergeometric functions ${}_pF_q$ plays a fundamental role in both mathematics and mathematical physics. Many of the elementary functions that arise in analysis can in fact be viewed as limiting cases or special instances of hypergeometric functions ${}_pF_q$.
\par
It is worth noting that whenever Gauss' hypergeometric function ${}_2F_1$ or, more generally, the generalized hypergeometric function ${}_pF_q$ reduces to expressions involving the gamma functions $\Gamma(z)$, the resulting formulas are of considerable importance from the perspective of applied considerations. Accordingly, the classical summation theorems for ${}_2F_1$, ${}_3F_2$, ${}_4F_3$, and related functions, as well as the associated transformation formulas, play a central role. 

\subsection{Gauss' summation theorems}
In the present investigation, we concentrate on the following two summation formulas for the function ${}_2F_1$, originally due to Gauss~\cite{7,23}.

\begin{thm1st}
If $\Re{(c-a-b)}>0$, then
\begin{equation}\label{2}
{\,}_2F_1\begin{bmatrix}
\begin{gathered}
a,b\\ c
\end{gathered};1\end{bmatrix}
=\frac{\Gamma(c)\Gamma(c-a-b)}{\Gamma(c-a)\Gamma(c-b)}.
\end{equation}
\end{thm1st}

\begin{thm2nd}
If $\frac{a+b+1}{2}\not\in\mathbb{Z}_0^-$, then
\begin{equation}\label{3}
{\,}_2F_1\begin{bmatrix}
\begin{gathered}
a,b\\ \tfrac{a+b+1}{2}\end{gathered};\frac{1}{2}\end{bmatrix}
=\frac{\Gamma\bigl(\frac{1}{2}\bigr) \Gamma\bigl(\frac{a+b+1}{2}\bigr)} {\Gamma\bigl(\frac{a+1}{2}\bigr) \Gamma\bigl(\frac{b+1}{2}\bigr)}.
\end{equation}
\end{thm2nd}

Gauss' summation theorem~\eqref{2} has been extended~\cite{22} as
\begin{equation}\label{4}
{\,}_3F_2\begin{bmatrix}
\begin{gathered}
a,b,d+1\\ c+1,d
\end{gathered};1
\end{bmatrix}
=\biggl(c-a-b+\frac{ab}{d}\biggr) \frac{\Gamma\bigl(c+1\bigr)\Gamma(c-a-b)}{\Gamma\bigl(c-a+1\bigr)\Gamma\bigl(c-b+1\bigr)}
\end{equation}
provided $\Re{(c-a-b)}>0$ and $d\neq 0,-1,-2,\dotsc$.
When $d=c$, the extension~\eqref{4} reduces to~\eqref{2}.

\subsection{Kummer's second theorem}
Kummer's second theorem~\cite[p.~126]{23} reads that
\begin{equation}\label{5}
\te^{-x/2}{\,}_1F_1\begin{bmatrix}
\begin{gathered}
a\\ 2a
\end{gathered};x
\end{bmatrix}
={\,}_0F_1\begin{bmatrix}
\begin{gathered}\\ a+\tfrac{1}{2}
\end{gathered};
\dfrac{x^2}{16}
\end{bmatrix}.
\end{equation}
From~\eqref{5}, one can deduce
\begin{align}\label{6}
{\,}_2F_1\begin{bmatrix}
\begin{gathered}
-2n, a\\ 2a
\end{gathered}
;2\end{bmatrix}
&=\frac{\bigl(\frac{1}{2}\bigr)_n}{\bigl(a+\frac{1}{2}\bigr)_n}
\intertext{and}
\label{7}
{\,}_2F_1\begin{bmatrix}
\begin{gathered}
-2n-1, a\\2a
\end{gathered};2\end{bmatrix}&=0
\end{align}
for $n\in\mathbb{N}_0$; see~\cite[pp.~126--127, Eqs.~(2) and~(10)]{23}.
\par
In the year 1995, Rathie and Nagar~\cite{25} established the following two results closely-related to~\eqref{5}:
\begin{align}\label{8}
\te^{-x/2}{\,}_1F_1\begin{bmatrix}
\begin{gathered}
a\\ 2a+1
\end{gathered};x
\end{bmatrix}
&={\,}_0F_1\begin{bmatrix}
\begin{gathered}
\\ a+\tfrac{1}{2}
\end{gathered};\dfrac{x^2}{16}\end{bmatrix}
-\frac{x}{2(2a+1)}{\,}_0F_1\begin{bmatrix}
\begin{gathered}\\ a+\tfrac{3}{2}\end{gathered};\dfrac{x^2}{16}\end{bmatrix}
\intertext{and}
\label{1.7a}
\te^{-x/2}{\,}_1F_1\begin{bmatrix}
\begin{gathered}a\\ 2a-1\end{gathered};x\end{bmatrix}
&={\,}_0F_1\begin{bmatrix}\begin{gathered}\\a-\tfrac12\end{gathered};\dfrac{x^2}{16}\end{bmatrix}
+\frac{x}{2(2a-1){\,}_0F_1}\begin{bmatrix}\begin{gathered}\\a+\tfrac12\end{gathered};\dfrac{x^2}{16}\end{bmatrix}.
\end{align}
From~\eqref{8} and~\eqref{1.7a}, one can deduce
\begin{align}\label{9}
{\,}_2F_1\begin{bmatrix}
\begin{gathered}
-2n, a\\ 2a+1
\end{gathered};2\end{bmatrix}
&=\frac{\bigl(\frac{1}{2}\bigr)_n}{\bigl(a+\frac{1}{2}\bigr)_n},\\
\label{10}
{\,}_2F_1\begin{bmatrix}\begin{gathered}-2n-1, a\\ 2a+1\end{gathered};2\end{bmatrix}
&=\frac{\bigl(\frac{3}{2}\bigr)_n}{(2a+1)\bigl(a+\frac{3}{2}\bigr)_n},\\
{\,}_2F_1\begin{bmatrix}\begin{gathered}-2n, a\\ 2a-1\end{gathered};2\end{bmatrix} &=\frac{\bigl(\frac{1}{2}\bigr)_n}{\bigl(a-\frac{1}{2}\bigr)_n},\notag
\intertext{and}
{\,}_2F_1\begin{bmatrix}\begin{gathered}-2n-1, a\\ 2a-1\end{gathered};2\end{bmatrix} &=-\frac{1}{2a-1}\frac{\bigl(\frac{3}{2}\bigr)_n}{\bigl(a+\frac{1}{2}\bigr)_n}.\notag
\end{align}
for $n\in\mathbb{N}_0$; see also~\cite{16,17}.
\par
In 2008, Rathie and Pog\'any~\cite{26} established a natural extension of Kummer's second theorem~\eqref{5} as
\begin{equation}\label{14}
\te^{-x/2} {\,}_2F_2\begin{bmatrix}\begin{gathered}a,d+1\\ 2a+1,d\end{gathered};x\end{bmatrix}
={\,}_0F_1\begin{bmatrix}\begin{gathered}\\ a+\tfrac{1}{2}\end{gathered};\dfrac{x^2}{16}\end{bmatrix}
+\frac{x(2a-d)}{2d(2a+1)}{\,}_0F_1\begin{bmatrix}\begin{gathered}\\ a+\tfrac{3}{2}\end{gathered};\dfrac{x^2}{16}\end{bmatrix}.
\end{equation}
When $d=2a$, the extension~\eqref{14} reduces to~\eqref{5}.
\par
In 2010, Kim et al.~\cite{15} generalized Kummer's second theorem~\eqref{5}, obtained explicit expressions of
\begin{equation*}
\te^{-x/2}{\,}_1F_1\begin{bmatrix}
\begin{gathered}
a\\ 2a+k\end{gathered};x\end{bmatrix}, \quad k=0,\pm1, \pm2,\dotsc, \pm5,
\end{equation*}
and discussed some applications. One of those results in~\cite{15} is
\begin{multline}\label{11}
\te^{-x/2}{\,}_1F_1\begin{bmatrix}\begin{gathered}
a\\ 2a+2\end{gathered};x\end{bmatrix}
={\,}_0F_1\begin{bmatrix}\begin{gathered}\\ a+\tfrac{3}{2}\end{gathered};\dfrac{x^2}{16}\end{bmatrix}\\
-\frac{x}{2(a+1)}{\,}_0F_1\begin{bmatrix}\begin{gathered}\\ a+\tfrac{3}{2}\end{gathered};\dfrac{x^2}{16}\end{bmatrix}
+\frac{x^2}{4(a+1)(2a+3)}{\,}_0F_1\begin{bmatrix}\begin{gathered}\\ a+\tfrac{5}{2}\end{gathered};\dfrac{x^2}{16}\end{bmatrix}.
\end{multline}
In the same paper~\cite{15}, from~\eqref{11}, they obtained
\begin{align*}%
{\,}_2F_1\begin{bmatrix}\begin{gathered}-2n, a\\ 2a+2\end{gathered};2\end{bmatrix}
&=\frac{\bigl(\frac{1}{2}\bigr)_n\bigl(\frac{a+3}{2}\bigr)_n}{\bigl(a+\frac{3}{2}\bigr)_n\bigl(\frac{a+1}{2}\bigr)_n}
\intertext{and}
{\,}_2F_1\begin{bmatrix}\begin{gathered}-2n-1, a\\ 2a+2\end{gathered};2\end{bmatrix}
&=\frac{\bigl(\frac{3}{2}\bigr)_n}{\bigl(a+1\bigr)\bigl(a+\frac{3}{2}\bigr)_n}
\end{align*}
for $n\in\mathbb{N}_0$.
\par
In 2012, from the extension~\eqref{14}, Kim et al.~\cite{14} derived
\begin{align}\label{15}
{\,}_3F_2\begin{bmatrix}\begin{gathered}-2n, a, d+1\\ 2a+1, d \end{gathered};2\end{bmatrix}
&=\frac{\bigl(\frac{1}{2}\bigr)_n}{\bigl(a+\frac{1}{2}\bigr)_n}
\intertext{and}
\label{16}
{\,}_3F_2\begin{bmatrix}\begin{gathered}-2n-1, a, d+1\\ 2a+1, d \end{gathered};2\end{bmatrix}
&=\frac{\bigl(1-\frac{2a}{d}\bigr)\bigl(\frac{3}{2}\bigr)_n}{\bigl(2a+1\bigr)\bigl(a+\frac{3}{2}\bigr)_n}
\end{align}
for $n\in\mathbb{N}_0$.
When $d=2a$, the extensions~\eqref{15} and~\eqref{16} reduce to~\eqref{6} and~\eqref{7}, respectively. 
\par
In 2021, Awad et al.~\cite{3} established one more extension of Kummer's second theorem~\eqref{5} by
\begin{multline}\label{17}
\te^{-x/2} {\,}_2F_2\begin{bmatrix}\begin{gathered}a,d+1\\ 2a+2,d\end{gathered};x\end{bmatrix}
={\,}_0F_1\begin{bmatrix}\begin{gathered}\\ a+\tfrac{3}{2}\end{gathered};\dfrac{x^2}{16}\end{bmatrix}
+\frac{x\bigl(\frac{a}{d}-1\bigr)}{2(a+1)}{\,}_0F_1\begin{bmatrix}\begin{gathered}\\ a+\tfrac{3}{2}\end{gathered};\dfrac{x^2}{16}\end{bmatrix}\\
+\frac{x^2\bigl(1-\frac{a}{d}\bigr)}{4(a+1)(2a+3)}{\,}_0F_1\begin{bmatrix}\begin{gathered}\\ a+\tfrac{5}{2}\end{gathered};\dfrac{x^2}{16}\end{bmatrix}.
\end{multline}
From~\eqref{17}, they further acquired
\begin{align}\label{18}
{\,}_3F_2\begin{bmatrix}\begin{gathered}-2n, a, d+1\\ 2a+2, d \end{gathered};2\end{bmatrix}
&=\frac{\bigl(\frac{1}{2}\bigr)_n}{\bigl(a+\frac{3}{2}\bigr)_n}\biggl[1+\frac{2n\bigl(1-\frac{a}{d}\bigr)}{a+1}\biggr]
\intertext{and}
\label{19}
{\,}_3F_2\begin{bmatrix}\begin{gathered}-2n-1, a, d+1\\ 2a+2, d \end{gathered};2\end{bmatrix}
&=\biggl(1-\frac{a}{d}\biggr)\frac{\bigl(\frac{3}{2}\bigr)_n}{(a+1)\bigl(a+\frac{3}{2}\bigr)_n}
\end{align}
for $n\in\mathbb{N}_0$.
When $d=a$, the equalities~\eqref{18} and~\eqref{19} reduce to~\eqref{6} and~\eqref{7}, respectively.

\subsection{Four classical combinatorial identities}
In this section, we review four classical combinatorial identities and outline their historical development.

\subsubsection{Knuth's old sum}
In terms of binomial coefficients $\binom{n}{k}$ and the Pochhammer symbol $(z)_n$, the well‑known combinatorial sum commonly referred to as Knuth's old sum~\cite{28}, or equivalently Reed Dawson's combinatorial identity, is given for $\nu\in\mathbb{N}_0$ by
\begin{equation}\label{20}
\sum_{k=0}^{n}\frac{(-1)^k}{2^k}\binom{n}{k}\binom{2k}{k}
=\begin{dcases}
\frac{1}{2^{2\nu}}\binom{2\nu}{\nu}=\frac{\bigl(\frac12\bigr)_\nu}{(1)_\nu}, &n=2\nu;\\
0; &n=2\nu+1.
\end{dcases}
\end{equation}
\par
It is worth noting that Reed Dawson communicated the above identity~\eqref{20} privately to Riordan, who later recorded it in his well-known book~\cite[p.~71]{28}. A variety of interesting and distinct proofs of these sums have appeared in the literature on combinatorial identities; for a comprehensive overview, see the survey by Prodinger~\cite{21}. We also note that Janassen and Knuth~\cite{12} provided an elementary proof of~\eqref{20} based on a recursion for the binomial coefficients. Gessel~\cite{10} represented the binomial coefficients $\binom{n}{k}$ as coefficients in suitable generating functions, while Rousseau~\cite{10} showed that the sums~\eqref{20} can be expressed as the constant term in the expansion of $\bigl(x^2+\frac{1}{x^2}\bigr)^n$. Finally, Prodinger~\cite{20} employed the classical Euler transformation to obtain another proof of~\eqref{20}.
\par
In 1974, Andrews~\cite[p.~478]{1} established the above sums~\eqref{20} by employing Gauss' second summation theorem~\eqref{3}. In 2004, Choi et al.~\cite{5} utilized the results~\eqref{6} and~\eqref{7} to establish~\eqref{20}.
\par
For a natural generalization of Knuth's old sum~\eqref{20}, we refer to the paper~\cite{24}.

\subsubsection{Riordan's combinatorial identity}
The second combinatorial identity we are discussing is
\begin{equation}\label{2.2}
\sum_{k=0}^{n}\frac{(-1)^k}{2^k}\binom{n+1}{k+1}\binom{2k}{k}
=\begin{dcases}
\frac{\bigl(\frac32\bigr)_\nu}{(1)_\nu}, &n=2\nu\\
\frac{\bigl(\frac32\bigr)_\nu}{(1)_\nu}, &n=2\nu+1
\end{dcases}
\end{equation}
for $\nu\in\mathbb{N}_0$. Riordan~\cite{28} established the combinatorial identity~\eqref{2.2} by the method of inverse relations.
\par
Lim~\cite{19} established a generalization of the above identity~\eqref{2.2} by employing the results~\eqref{9} and~\eqref{10}. 
\par
For another extension of Knuth's old sum~\eqref{20} and Riordan's combinatorial identity~\eqref{2.2}, we refer to the paper~\cite{13}.

\subsubsection{Gould's combinatorial identity}
For $n\in\mathbb{N}_0$, the combinatorial identity
\begin{equation}\label{22}
\sum_{k=0}^{\lfloor n/2\rfloor}\frac{1}{2^{2k}}\binom{n}{2k}\binom{2k}{k} =\frac{1}{2^n}\binom{2n}{n}=2^{n}\frac{\bigl(\frac{1}{2}\bigr)_n}{(1)_n}
\end{equation}
is valid, where $\lfloor x\rfloor$ stands for the floor function whose value equals the largest integer less than or equal to $x$; see~\cite[p.~34, Eq.~(3.99)]{8}.

\subsubsection{Touchard's combinatorial identity}
For $n\in\mathbb{N}_0$, the combinatorial identity
\begin{equation}\label{23}
\sum_{k=0}^{\lfloor n/2\rfloor}\binom{n}{2k}\frac{C_k}{2^{2k}}=\frac{C_{n+1}}{2^n}=2^{n}\frac{\bigl(\frac{3}{2}\bigr)_n}{(3)_n}
\end{equation}
is valid, see~\cite{33}, where
\begin{equation}\label{Catalan}
C_n=\frac{1}{n+1}\binom{2n}{n}, \quad n\in\mathbb{N}_0
\end{equation}
stands for the Catalan numbers; see~\cite{18, Catalan-Int-Surv.tex}.
In~\cite{11, 29, 30}, there have been several different proofs of Touchard's combinatorial identity~\eqref{23}, which are combinatorial in nature. In the paper~\cite{31}, Shpiro gave a combinatorial interpretation of the number $\binom{n}{2k}2^{n-2k}C_k$.
\par
In~\cite{27}, Rathie and Lim generalized the identity~\eqref{23} and mentioned that the identities~\eqref{2.2} and~\eqref{22} can be established with the help of Gauss' first summation theorem~\eqref{2}.
\par
It is not difficult to see that, in terms of the Pochhammer symbol $(z)_n$, the binomial numbers $\binom{n}{k}$ and the Catalan numbers~\eqref{Catalan} can be reformulated as
\begin{align}\label{25}
\binom{2n}{n}&=2^{2n}\frac{\bigl(\frac{1}{2}\bigr)_n}{(1)_n}, & \binom{n}{k}&=\frac{(-1)^k(-n)_k}{(1)_k},\\
\label{27}
\binom{k+n}{k}&=\frac{\bigl(n+1\bigr)_k}{(1)_k}, & \binom{n+1}{k+1}&=(-1)^k(n+1)\frac{\bigl(-n\bigr)_k}{(2)_k},\\
\label{29}
\binom{k+d}{k+1}&=\frac{d(d+1)_k}{(2)_k}, & \binom{n}{2k}&=\frac{\bigl(-\frac{n}2\bigr)_k\bigl(-\frac{n}2+\frac12\bigr)_k}{\bigl(\frac12\bigr)_k(1)_k},\\
\label{26}
\binom{k+d}{k}&=\frac{(d+1)_k}{(1)_k}, & C_n&=2^{2n}\frac{\bigl(\frac{1}{2}\bigr)_n}{(2)_n}.
\end{align}
These transforms will be used in Sections~\ref{sec-four} and~\ref{section5} below.
\par
It is well known that binomial coefficients and their reciprocals play an important role in many areas of mathematics, including number theory, probability, and statistics. In particular, sums involving central binomial coefficients have been studied extensively over a long period. Numerous properties of central binomial coefficients and their reciprocals can be found in the monograph~\cite{18} and the papers~\cite{Catalan-Int-Surv.tex, ScienceAsia-2022-0169.tex, arcsin-power-wei.tex}. Gould~\cite{8} has compiled a large collection of combinatorial identities involving central binomial coefficients, and the book by Riordan~\cite{28} also serves as a valuable reference.
\par
The applications of the Gauss hypergeometric function ${\,}_2F_1$ and the generalized hypergeometric functions ${}_pF_q$ in applied mathematics, number theory, probability, statistics, engineering mathematics, and combinatorial analysis are well established. In particular, the use of hypergeometric functions for evaluating binomial sums was originally suggested by Andrews~\cite{1}.
In this approach, the given binomial sum is first expressed in terms of an ordinary or generalized hypergeometric function. This is achieved by absorbing any polynomial terms in the summation index into the binomial coefficients, expanding the binomials in terms of factorials, and then converting those factorials into the Pochhammer symbols. Once the sum has been successfully transformed into a hypergeometric function, it is identified with known summation theorems available in the literature. A closed-form evaluation is then obtained when a suitable identity is found. This technique will be illustrated in Sections~\ref{sec-kummer} and~\ref{sec-four}.
\par
In the present work, we focus on Kummer's second theorem~\eqref{5} and four well-known combinatorial identities---Knuth's old sum~\eqref{20} (also known as Reed Dawson's combinatorial identity), Riordan's combinatorial identity~\eqref{2.2}, Gould's combinatorial identity~\eqref{22}, and Touchard's combinatorial identity~\eqref{23}---which we will discuss in turn.
\par
In this paper, we aim to establish extensions of the above-mentioned Kummer's second theorem~\eqref{5} and four famous combinatorial identities. For this, the rest of this paper is organized as follows.
\par
In Section~\ref{sec-kummer}, we further extend Kummer's second theorem~\eqref{5}. In Section~\ref{sec3}, we establish an extension of Knuth's classical sum~\eqref{20} by applying the results in~\eqref{15} and~\eqref{16}. Section~\ref{sec4} presents an extension of Riordan's combinatorial identity~\eqref{2.2}, derived using~\eqref{18} and~\eqref{19}. In Sections~\ref{sec5} and~\ref{sec6}, we develop extensions of Gould's identity~\eqref{22} and Touchard's identity~\eqref{23}, respectively; both are obtained with the aid of the extended form of Gauss's summation theorem~\eqref{4}. In Section~\ref{section5}, additional extensions of these four combinatorial identities are presented in the form of two theorems. Each section concludes with several known results and new identities that arise as special cases. The results presented in this paper are concise, of independent interest, and readily derived, and they may prove useful in further investigations.

\section{Extension of Kummer's second theorem}\label{sec-kummer}
In this section, we will establish a natural extension of the well-known Kummer's second theorem~\eqref{5}.

\begin{thm}\label{Kummer-extension-thm}
For $2a,d\not\in\mathbb{Z}_0^-$, the equality
\begin{equation}\label{Kummer-extension-Eq}
\te^{-x/2}{\,}_2F_2\begin{bmatrix}
\begin{gathered}
a,d+1\\ 2a, d
\end{gathered};x
\end{bmatrix}
={\,}_1F_2\begin{bmatrix}
\begin{gathered}
1+\tfrac{d}{2}\\ a+\tfrac{1}{2},\tfrac{d}{2}
\end{gathered};\dfrac{x^2}{16}
\end{bmatrix}
+\frac{x}{2d}{\,}_0F_1\begin{bmatrix}
\begin{gathered}
\\ a+\tfrac{1}{2}
\end{gathered};\dfrac{x^2}{16}
\end{bmatrix}
\end{equation}
is valid.
\end{thm}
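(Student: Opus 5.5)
The idea is to split the ${}_2F_2$ on the left of~\eqref{Kummer-extension-Eq} into two confluent functions and apply Kummer's second theorem~\eqref{5} to each piece. The starting point is the elementary identity
\[
\frac{(d+1)_n}{(d)_n}=\frac{d+n}{d}=1+\frac{n}{d}.
\]
With it,
\[
{\,}_2F_2\begin{bmatrix}\begin{gathered}a,d+1\\ 2a,d\end{gathered};x\end{bmatrix}
={\,}_1F_1\begin{bmatrix}\begin{gathered}a\\ 2a\end{gathered};x\end{bmatrix}
+\frac{1}{d}\sum_{n=1}^{\infty}\frac{(a)_n}{(2a)_n}\frac{x^n}{(n-1)!}.
\]
In the second sum I shift $n\to n+1$ and use $(a)_{n+1}=a(a+1)_n$ and $(2a)_{n+1}=2a(2a+1)_n$. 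This turns it into $\frac{x}{2d}\,{\,}_1F_1(a+1;2a+1;x)$, so that
\[
\te^{-x/2}{\,}_2F_2=\te^{-x/2}{\,}_1F_1(a;2a;x)+\frac{x}{2d}\,\te^{-x/2}{\,}_1F_1(a+1;2a+1;x).
\]

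The first term equals ${\,}_0F_1(-;a+\tfrac12;x^2/16)$ directly by~\eqref{5}. For the second term I would use the reflection of~\eqref{1.7a}: apply Kummer's transformation ${\,}_1F_1(\alpha;\beta;x)=\te^{x}{\,}_1F_1(\beta-\alpha;\beta;-x)$, which gives
\[
\te^{-x/2}{\,}_1F_1(a+1;2a+1;x)=\te^{x/2}{\,}_1F_1(a;2a+1;-x).
\]
Then I replace $x$ by $-x$ in the Rathie--Nagar formula~\eqref{8}. The result is
\[
\te^{-x/2}{\,}_1F_1(a+1;2a+1;x)={\,}_0F_1(-;a+\tfrac12;x^2/16)+\frac{x}{2(2a+1)}{\,}_0F_1(-;a+\tfrac32;x^2/16).
\]
Combining the two terms, the left side of~\eqref{Kummer-extension-Eq} becomes
\[
\Bigl(1\Bigr){\,}_0F_1(-;a+\tfrac12;\tfrac{x^2}{16})+\frac{x}{2d}{\,}_0F_1(-;a+\tfrac12;\tfrac{x^2}{16})+\frac{x^2}{4d(2a+1)}{\,}_0F_1(-;a+\tfrac32;\tfrac{x^2}{16}).
\]

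Finally, I need to check that the first and third terms combine into the ${\,}_1F_2$ on the right. I would do this by writing
\[
\frac{(1+d/2)_n}{(d/2)_n}=1+\frac{2n}{d},
\]
so that ${\,}_1F_2(1+\tfrac d2;a+\tfrac12,\tfrac d2;y)={\,}_0F_1(-;a+\tfrac12;y)+\frac{2}{d}\,y\,\frac{d}{dy}{\,}_0F_1(-;a+\tfrac12;y)$. Using
\[
\frac{d}{dy}{\,}_0F_1(-;b;y)=\frac1b\,{\,}_0F_1(-;b+1;y)
\]
with $b=a+\tfrac12$ and $y=x^2/16$, the extra piece is
\[
\frac{2}{d}\cdot\frac{x^2}{16}\cdot\frac{2}{2a+1}\,{\,}_0F_1(-;a+\tfrac32;\tfrac{x^2}{16})=\frac{x^2}{4d(2a+1)}\,{\,}_0F_1(-;a+\tfrac32;\tfrac{x^2}{16}).
\]
This matches the third term above, which completes the identity.

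The main point requiring care is the sign and bookkeeping in the second step, i.e.\ getting the Kummer-transformed version of~\eqref{8} right. As a fallback, that step can instead be verified by comparing coefficients of $x^n$ after multiplying by $\te^{x/2}$. The hypotheses $2a,d\notin\mathbb{Z}_0^-$ ensure that every series involved is well defined; for $2a+1$ and $a+\tfrac12$ this follows from $2a\notin\mathbb{Z}_0^-$ except at isolated points, which can be handled by analytic continuation in $a$.
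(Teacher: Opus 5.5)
Your proof is correct and follows the paper's own route: split the ${}_2F_2$ into ${}_1F_1(a;2a;x)+\frac{x}{2d}\,{}_1F_1(a+1;2a+1;x)$, evaluate the two pieces by \eqref{5} and a contiguous Kummer formula, and recombine the ${}_0F_1$ terms into the ${}_1F_2$. The differences are minor. You derive $\te^{-x/2}{}_1F_1(a+1;2a+1;x)$ from \eqref{8} via Kummer's first transformation, where the paper just quotes \eqref{1.7a} with $a\mapsto a+1$ (the same formula), and you actually carry out the ${}_1F_2$ recombination that the paper calls ``not difficult''; also, no analytic continuation is needed, since $2a,d\notin\mathbb{Z}_0^-$ already excludes $2a+1$, $a+\frac12$, $a+\frac32$, $\frac d2\in\mathbb{Z}_0^-$.
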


\begin{proof}
In order to establish the required identity, we first establish the following result
\begin{equation}\label{a-rathir}
{\,}_2F_2\begin{bmatrix}
\begin{gathered}
a,d+1\\2a,d
\end{gathered};x
\end{bmatrix}
={\,}_1F_1\begin{bmatrix}
\begin{gathered}
a\\2a
\end{gathered};x
\end{bmatrix}
+\frac{x}{2d}{\,}_1F_1\begin{bmatrix}
\begin{gathered}
a+1\\2a+1
\end{gathered};x
\end{bmatrix}.
\end{equation}
For this, starting with the left-hand side of~\eqref{a-rathir}, we have, upon using its definition
\begin{equation*}
S={\,}_2F_2\begin{bmatrix}
\begin{gathered}
a,d+1\\2a,d
\end{gathered};x
\end{bmatrix}
=\sum_{n=0}^{\infty}\frac{(a)_n(d+1)_n}{(2a)_n(d)_n}\frac{x^n}{n!}
\end{equation*}
and the result $\tfrac{(d+1)_n}{(d)_n}=1+\tfrac{n}{d}$, we have
\begin{equation*}
S=\sum_{n=0}^{\infty}\biggl(1+\frac{n}{d}\biggr)\frac{(a)_n}{(2a)_n}\frac{x^n}{n!}.
\end{equation*}
Splitting into two series and summing up the first series, we have
\begin{equation*}
S={\,}_1F_1\begin{bmatrix}
\begin{gathered}
a\\2a
\end{gathered};x
\end{bmatrix}
+\frac{1}{d}\sum_{n=1}^{\infty}\frac{(a)_n}{(2a)_n}\frac{x^n}{(n-1)!}.
\end{equation*}
Now replacing $n-1$ by $n$ and using the result $(a)_{n+1}=a(a+1)_n$, we have
\begin{equation*}
S={\,}_1F_1\begin{bmatrix}
\begin{gathered}
a\\2a
\end{gathered};x
\end{bmatrix}
+\frac{x}{2d}\sum_{n=0}^{\infty}\frac{(a+1)_n}{(2a+1)_n}\frac{x^n}{n!}.
\end{equation*}
Finally, summing up the second series, we easily arrive at the right-hand side of~\eqref{a-rathir}.
\par
Now we are ready to establish our identity~\eqref{Kummer-extension-Eq}. For this, multiplying both sides of~\eqref{a-rathir} by $\te^{-x/2}$, we have
\begin{equation*}
\te^{-x/2}{\,}_2F_2\begin{bmatrix}
\begin{gathered}
a,d+1\\2a,d
\end{gathered};x
\end{bmatrix}=\te^{-x/2}{\,}_1F_1\begin{bmatrix}
\begin{gathered}
a\\ 2a
\end{gathered}; x
\end{bmatrix}
+\frac{x}{2d}\te^{-x/2}{\,}_1F_1\begin{bmatrix}\begin{gathered}
a+1\\2a+1
\end{gathered};x\end{bmatrix}.
\end{equation*}
Now observe that the first expression on the right-hand side can be evaluated with the help of the well-known Kummer's second theorem~\eqref{5} and the second expression can be evaluated with the help of the contiguous Kummer's second theorem~\eqref{1.7a}.
\par
Direct computation gives
\begin{multline}\label{4-rathie}
\te^{-x/2}{\,}_2F_2\begin{bmatrix}
\begin{gathered}
a,d+1\\2a,d
\end{gathered}; x
\end{bmatrix}\\
\begin{aligned}
&={\,}_0F_1\begin{bmatrix}
\begin{gathered}
\\a+\tfrac12
\end{gathered};\dfrac{x^2}{16}\end{bmatrix}
+\frac{x}{2d}\biggl({\,}_0F_1\begin{bmatrix}
\begin{gathered}
\\a+\tfrac12
\end{gathered};\dfrac{x^2}{16}\end{bmatrix}
+\frac{x}{2(2a+1)}{\,}_0F_1\begin{bmatrix}
\begin{gathered}
\\a+\tfrac32
\end{gathered};\dfrac{x^2}{16}
\end{bmatrix}\biggr)\\
&={\,}_0F_1\begin{bmatrix}
\begin{gathered}
\\a+\tfrac12
\end{gathered};\dfrac{x^2}{16}\end{bmatrix}
+\frac{x^2}{4d(2a+1)}{\,}_0F_1\begin{bmatrix}
\begin{gathered}
\\a+\tfrac32
\end{gathered};\dfrac{x^2}{16}\end{bmatrix}
+\frac{x}{2d}{\,}_0F_1\begin{bmatrix}
\begin{gathered}
\\a+\tfrac12
\end{gathered};\dfrac{x^2}{16}
\end{bmatrix}.
\end{aligned}
\end{multline}
But it is not difficult to prove that
\begin{equation*}
{\,}_0F_1\begin{bmatrix}
 \begin{gathered}
 \\ a+\tfrac12
 \end{gathered};\dfrac{x^2}{16}
\end{bmatrix}
+\frac{x^2}{4d(2a+1)}{\,}_0F_1\begin{bmatrix}
 \begin{gathered}
 \\ a+\tfrac32
 \end{gathered};\dfrac{x^2}{16}
\end{bmatrix}
={\,}_1F_2\begin{bmatrix}
 \begin{gathered}
 1+\tfrac{d}{2}\\ a+\tfrac12,\tfrac{d}{2}
 \end{gathered};\dfrac{x^2}{16}
\end{bmatrix}.
\end{equation*}
Hence, in view of~\eqref{4-rathie}, we finally obtain
\begin{equation*}
\te^{-x/2}{\,}_2F_2\begin{bmatrix}
 \begin{gathered}a,d+1\\ 2a, d\end{gathered};x
\end{bmatrix}
={\,}_1F_2\begin{bmatrix}
 \begin{gathered}
 1+\tfrac{d}{2}\\a+\tfrac{1}{2},\tfrac{d}{2}
 \end{gathered};\dfrac{x^2}{16}
\end{bmatrix}
+\frac{x}{2d}{\,}_0F_1\begin{bmatrix}
 \begin{gathered}
 \\a+\tfrac{1}{2}
 \end{gathered};\dfrac{x^2}{16}
\end{bmatrix}.
\end{equation*}
This completes the proof of~\eqref{Kummer-extension-Eq}. The proof of Theorem~\ref{Kummer-extension-thm} is complete.
\end{proof}

\begin{rem}
Letting $d\to\infty$ in~\eqref{Kummer-extension-Eq} readily leads to Kummer's second theorem~\eqref{5}. The identity~\eqref{Kummer-extension-Eq} is closely related to~\eqref{14} and~\eqref{17}.
\end{rem}

\begin{rem}
When taking $d=2a-1$ in~\eqref{Kummer-extension-Eq}, we immediately recover the known result~\eqref{1.7a}.
\end{rem}

\section{Two new formulas for the terminating series ${\,}_3F_2$}
In this section, we will establish two new interesting results for the terminating series ${\,}_3F_2$.

\begin{thm}
For $2a,d\not\in\mathbb{Z}_0^-$ and $n\in\mathbb{N}_0$, we have
\begin{align}\label{B.1}
{\,}_3F_2\begin{bmatrix}
\begin{gathered}
-2n,a,d+1\\ 2a,d
\end{gathered};2
\end{bmatrix}
&=\frac{\bigl(\frac{1}{2}\bigr)_n}{\bigl(a+\frac{1}{2}\bigr)_n}\frac{\bigl(1+\frac{d}{2}\bigr)_n}{\bigl(\frac{d}{2}\bigr)_n}
\intertext{and}
\label{B.2}
{\,}_3F_2\begin{bmatrix}
\begin{gathered}
-2n-1,a,d+1\\ 2a,d
\end{gathered};2
\end{bmatrix}
&=-\frac{1}{d}\frac{\bigl(\frac{3}{2}\bigr)_n}{\bigl(a+\frac{1}{2}\bigr)_n}.
\end{align}
\end{thm}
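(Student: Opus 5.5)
The plan is to mimic the decomposition~\eqref{a-rathir} used in the proof of Theorem~\ref{Kummer-extension-thm}, but at the level of terminating ${}_3F_2$ series. We do not extract coefficients from~\eqref{Kummer-extension-Eq}. Let $m\in\mathbb{N}_0$. Using $\frac{(d+1)_k}{(d)_k}=1+\frac{k}{d}$, split the series as
\begin{equation*}
{\,}_3F_2\begin{bmatrix}\begin{gathered}-m,a,d+1\\ 2a,d\end{gathered};2\end{bmatrix}
={\,}_2F_1\begin{bmatrix}\begin{gathered}-m,a\\ 2a\end{gathered};2\end{bmatrix}
+\frac{1}{d}\sum_{k=1}^{m}\frac{(-m)_k(a)_k}{(2a)_k}\frac{2^k}{(k-1)!}.
\end{equation*}
In the second sum we shift $k\to k+1$ and use $(\alpha)_{k+1}=\alpha(\alpha+1)_k$. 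This turns it into
\begin{equation*}
-\frac{m}{d}\,{\,}_2F_1\begin{bmatrix}\begin{gathered}-m+1,a+1\\ 2a+1\end{gathered};2\end{bmatrix},
\end{equation*}
because the factor $\frac{(-m)\,a\cdot 2}{2a}$ equals $-m$.

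The key observation is that $2a+1=2(a+1)-1$. So the new ${}_2F_1$ is exactly of the contiguous type $\bigl[\,\cdot\,,b;2b-1;2\bigr]$ with $b=a+1$, whose even and odd terminating values are listed right after~\eqref{10}. These values come from~\eqref{1.7a}. The first ${}_2F_1$ is handled by~\eqref{6} and~\eqref{7}.

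For~\eqref{B.1}, take $m=2n$. The first term is $\frac{(1/2)_n}{(a+1/2)_n}$ by~\eqref{6}. In the second term the upper parameter is $-(2(n-1)+1)$, so the odd $2b-1$ formula with $b=a+1$ gives
\begin{equation*}
-\frac{1}{2a+1}\frac{(3/2)_{n-1}}{(a+3/2)_{n-1}}.
\end{equation*}
Multiplying by $-\frac{2n}{d}$ and using $(3/2)_{n-1}=2(\tfrac12)_n$ and $(2a+1)(a+\tfrac32)_{n-1}=2(a+\tfrac12)_n$ gives $\frac{2n}{d}\frac{(1/2)_n}{(a+1/2)_n}$. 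The total is $\frac{(1/2)_n}{(a+1/2)_n}\bigl(1+\frac{2n}{d}\bigr)$. This matches~\eqref{B.1} since $\frac{(1+d/2)_n}{(d/2)_n}=1+\frac{2n}{d}$. The case $n=0$ is trivial and consistent, since the second term vanishes.

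For~\eqref{B.2}, take $m=2n+1$. The first term vanishes by~\eqref{7}. The second term is $-\frac{2n+1}{d}$ times the even $2b-1$ formula with $b=a+1$, namely $\frac{(1/2)_n}{(a+1/2)_n}$. Using $(2n+1)(\tfrac12)_n=(\tfrac32)_n$ yields~\eqref{B.2}.

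The only delicate point, which I expect to be the main obstacle, is the bookkeeping of the contiguous formulas. One must correctly recognize $2a+1$ as $2b-1$ rather than $2b+1$, and track the index shift $n\to n-1$ in the even case together with the accompanying Pochhammer identities. The rest is routine. As a sanity check, one can compare with the coefficient of $x^{m}$ obtained from~\eqref{Kummer-extension-Eq} after multiplying by $\te^{x/2}$, or let $d\to\infty$ to recover~\eqref{6} and~\eqref{7}.
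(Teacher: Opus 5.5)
Your proof is correct, and it takes a different route from the paper's.

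\emph{The paper's route.} The paper obtains the theorem as a corollary of Theorem~\ref{Kummer-extension-thm}. It forms the Cauchy product of $\te^{-x/2}$ with the ${}_2F_2$ and identifies the coefficient of $x^n$ as $\frac{(-1)^n}{2^n n!}$ times the terminating ${}_3F_2(2)$. It then compares with the right-hand side of~\eqref{Kummer-extension-Eq}, where the ${}_1F_2$ part gives only even powers and the $\frac{x}{2d}\,{}_0F_1$ part only odd powers.

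\emph{Your route.} You perform the splitting~\eqref{a-rathir} directly on the finite sum. You then evaluate the two pieces by \eqref{6}, \eqref{7} and the two $2a-1$ formulas listed after~\eqref{10}, with $a\to a+1$. Those are exactly the coefficientwise forms of \eqref{5} and~\eqref{1.7a}, which the paper uses at the generating-function level. So both proofs rest on the same ingredients, applied in opposite order: you split and then sum, while the paper sums and then extracts coefficients.

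\emph{What each buys.} Your version is purely finite. It needs no double-series rearrangement and bypasses Theorem~\ref{Kummer-extension-thm} entirely, including the identity merging the two ${}_0F_1$ terms into a ${}_1F_2$, which the paper only asserts. It also shows that the factor $\frac{(1+d/2)_n}{(d/2)_n}=1+\frac{2n}{d}$ comes from the $\frac{k}{d}$ piece after the index shift $n\to n-1$. The price is some Pochhammer bookkeeping and a separate check at $n=0$. It is worth stating explicitly that $2a\notin\mathbb{Z}_0^-$ forces $2a+1\notin\mathbb{Z}_0^-$, so the shifted contiguous formulas are legitimate. The paper's route gives both parities at once from a single transformation, and presents the ${}_3F_2(2)$ evaluations as the coefficient content of that new Kummer-type formula.
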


\begin{proof}
Denote the left-hand side of~\eqref{Kummer-extension-Eq} by $S_1$. Expanding two functions into series yields
\begin{equation*}
S_1=\sum_{n=0}^{\infty}\sum_{m=0}^{\infty}\frac{(-1)^n}{2^nn!}\frac{(a)_m(d+1)_m}{(2a)_m(d)_m}\frac{x^{m+n}}{m!n!}.
\end{equation*}
Replacing $n$ by $n-m$ and using
\begin{equation*}
\sum_{n=0}^{\infty}\sum_{k=0}^{\infty}A(k,n)=\sum_{n=0}^{\infty}\sum_{k=0}^{n}A(k,n-k)
\end{equation*}
in~\cite[p.~56, Lemma~10]{23}, we have
\begin{equation*}
S_1=\sum_{n=0}^{\infty}\sum_{m=0}^{n}\frac{(-1)^{n-m}}{2^{n-m}(n-m)!} \frac{(a)_m(d+1)_m}{(2a)_m(d)_m}\frac{x^n}{m!}.
\end{equation*}
In view of the identity $(n-m)!=(-1)^m\frac{n!}{(-n)_m}$, we obtain
\begin{equation*}
S_1=\sum_{n=0}^{\infty}\frac{(-1)^n}{2^n}\frac{x^n}{n!} \sum_{m=0}^{n}\frac{(-n)_m(a)_m(d+1)_m}{(2a)_m(d)_m}\frac{2^m}{m!}.
\end{equation*}
Summing up the inner series, we acquire
\begin{equation*}
S_1=\sum_{n=0}^{\infty}\frac{(-1)^n}{2^n}\frac{x^n}{n!}{\,}_2F_2\begin{bmatrix}
\begin{gathered}
-n,a,d+1\\2a,d
\end{gathered};2
\end{bmatrix}.
\end{equation*}
Separating the last series into even and odd powers of $x$ and using the elementary identities
\begin{equation*}
(2n)!=2^{2n}n!\biggl(\frac{1}{2}\biggr)_n \quad\text{and}\quad (2n+1)!=2^{2n}n!\biggl(\frac{3}{2}\biggr)_n,
\end{equation*}
we have
\begin{multline}\label{B.3}
S_1=\sum_{n=0}^{\infty}\frac{1}{2^{4n}\bigl(\frac{1}{2}\bigr)_n}\frac{x^{2n}}{n!} {\,}_3F_2\begin{bmatrix}
\begin{gathered}
-2n,a,d+1\\2a,d
\end{gathered};2
\end{bmatrix}\\
-\sum_{n=0}^{\infty}\frac{1}{2^{4n+1}\bigl(\frac{3}{2}\bigr)_n}\frac{x^{2n+1}}{n!} {\,}_3F_2\begin{bmatrix}
\begin{gathered}
-2n-1,a,d+1\\2a,d
\end{gathered};2
\end{bmatrix}.
\end{multline}
\par
On the other hand, if we denote the right-hand side of~\eqref{Kummer-extension-Eq} by $S_2$, then
\begin{multline}\label{B.4}
S_2={\,}_1F_2\begin{bmatrix}
\begin{gathered}
1+\tfrac{d}{2}\\a+\tfrac{1}{2}, \tfrac{d}{2}
\end{gathered};\dfrac{x^2}{16}
\end{bmatrix}
+\frac{x}{2d} {\,}_0F_1\begin{bmatrix}
\begin{gathered}
\\a+\tfrac{1}{2}
\end{gathered};\dfrac{x^2}{16}
\end{bmatrix}\\
=\sum_{n=0}^{\infty}\frac{\bigl(1+\frac{d}{2}\bigr)_n} {\bigl(a+\frac{1}{2}\bigr)_n\bigl(\frac{d}{2}\bigr)_n}\frac{x^{2n}}{2^{4n}n!} +\frac{1}{d}\sum_{n=0}^{\infty}\frac{1}{2^{4n+1}\bigl(a+\frac{1}{2}\bigr)_n}\frac{x^{2n+1}}{n!}.
\end{multline}
Thus, from~\eqref{B.3} and~\eqref{B.4}, equating coefficients of $x^{2n}$ and $x^{2n+1}$, we easily arrive at the desired results~\eqref{B.1} and~\eqref{B.2}. The proof is thus complete.
\end{proof}

\begin{rem}
In~\eqref{B.1} and~\eqref{B.2}, if taking $d\to\infty$, we at once recover the classical results~\eqref{6} and~\eqref{7}, respectively.
\end{rem}

\begin{rem}
In~\eqref{B.1} and~\eqref{B.2}, if taking $d=a$ and then replacing $a$ by $a-1$, we obtain
\begin{align*}
{\,}_2F_1\begin{bmatrix}
\begin{gathered}
-2n,a\\2a-2
\end{gathered};2
\end{bmatrix}
&=\frac{\bigl(\frac{1}{2}\bigr)_n}{\bigl(a-\frac{1}{2}\bigr)_n}\frac{a-1+2n}{a-1}
\intertext{and}
{\,}_2F_1\begin{bmatrix}
\begin{gathered}
-2n-1,a\\2a-2
\end{gathered};2
\end{bmatrix}
&=-\frac{1}{a-1}\frac{\bigl(\frac{3}{2}\bigr)_n}{\bigl(a-\frac{1}{2}\bigr)_n}.
\end{align*}
These results are recorded in~\cite{16, 17}.
\end{rem}

\section{Extensions of four famous combinatorial identities}\label{sec-four}
We now start out to extend the identities~\eqref{20}, \eqref{2.2}, \eqref{22}, and~\eqref{23}.

\subsection{An extension of Knuth's old sum}\label{sec3}

In this section, we present two extensions of Knuth's old sum.

\begin{thm}\label{thm1}
For $d\neq 0,-1,-2,\dotsc$ and $n,\nu\in\mathbb{N}_0$, we have
\begin{equation}\label{Eqq3.5}
\sum_{k=0}^{n}\frac{(-1)^k}{2^k}\binom{n}{k}\binom{2k}{k}\frac{\binom{k+d}{k}}{\binom{k+d-1}{k}}
=\begin{dcases}
\frac{\bigl(\frac{1}{2}\bigr)_\nu}{(1)_\nu}\biggl(1+\frac{2\nu}{d}\biggr), & n=2\nu;\\
-\frac{1}{d}\frac{\bigl(\frac{3}{2}\bigr)_\nu}{(1)_\nu}, & n=2\nu+1.
\end{dcases}
\end{equation}
\end{thm}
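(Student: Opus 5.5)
The plan is to rewrite the left-hand side of \eqref{Eqq3.5} as a terminating ${\,}_3F_2$ at argument $2$ and then read off its value from \eqref{B.1} and \eqref{B.2} with $a=\tfrac12$. First I will simplify the ratio of binomial coefficients. From \eqref{26}, $\binom{k+d}{k}=\frac{(d+1)_k}{(1)_k}$ and $\binom{k+d-1}{k}=\frac{(d)_k}{(1)_k}$. Hence
\[
\frac{\binom{k+d}{k}}{\binom{k+d-1}{k}}=\frac{(d+1)_k}{(d)_k}=1+\frac{k}{d},
\]
and this is well defined because $d\notin\mathbb{Z}_0^-$.

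Next I use \eqref{25}, which gives $\binom{n}{k}=\frac{(-1)^k(-n)_k}{(1)_k}$ and $\binom{2k}{k}=2^{2k}\frac{(\frac12)_k}{(1)_k}$. Substituting these, the general term becomes
\[
\frac{(-1)^k}{2^k}\cdot\frac{(-1)^k(-n)_k}{k!}\cdot\frac{2^{2k}(\frac12)_k}{k!}\cdot\frac{(d+1)_k}{(d)_k}
=\frac{(-n)_k(\tfrac12)_k(d+1)_k}{(1)_k(d)_k}\frac{2^k}{k!}.
\]
The sign factors cancel and the powers of $2$ combine to $2^k$. The sum stops at $k=n$ because $(-n)_k=0$ for $k>n$. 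Therefore the left-hand side equals
\[
{\,}_3F_2\begin{bmatrix}\begin{gathered}-n,\tfrac12,d+1\\ 1,d\end{gathered};2\end{bmatrix}.
\]
This is exactly the left side of \eqref{B.1} or \eqref{B.2} with $a=\tfrac12$, since then $2a=1$. The hypothesis $2a\notin\mathbb{Z}_0^-$ holds.

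For $n=2\nu$, formula \eqref{B.1} gives
\[
\frac{(\frac12)_\nu}{(1)_\nu}\cdot\frac{(1+\frac d2)_\nu}{(\frac d2)_\nu}.
\]
The second factor telescopes: $\frac{(1+\frac d2)_\nu}{(\frac d2)_\nu}=\frac{\frac d2+\nu}{\frac d2}=1+\frac{2\nu}{d}$, which is the claimed value. For $n=2\nu+1$, formula \eqref{B.2} gives $-\frac1d\frac{(\frac32)_\nu}{(1)_\nu}$ directly, again because $a+\tfrac12=1$.

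There is no real obstacle; the only care needed is in the bookkeeping of signs and powers of $2$ during the conversion to Pochhammer symbols. As a consistency check, letting $d\to\infty$ should recover \eqref{20}.
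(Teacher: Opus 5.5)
Your proposal is correct and matches the paper's proof: you convert the sum to ${}_3F_2\bigl[-n,\tfrac12,d+1;1,d;2\bigr]$ using \eqref{25} and \eqref{26}, then specialize \eqref{B.1} and \eqref{B.2} at $a=\tfrac12$. Your explicit tracking of the signs and powers of $2$, together with the telescoping $(1+\frac d2)_\nu/(\frac d2)_\nu=1+\frac{2\nu}{d}$, fills in details the paper leaves implicit.
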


\begin{proof}
Denote the left-hand side of~\eqref{Eqq3.5} by $S_1$.
Converting all binomial coefficients involved in $S_1$ to the Pochhammer symbols by employing the appropriate elementary identities between~\eqref{25} and~\eqref{26} results in
$$
S_1=\sum_{k=0}^{n}\frac{(-1)^k}{2^k}\binom{n}{k}\binom{2k}{k}\frac{\binom{k+d}{k}}{\binom{k+d-1}{k}}
={\,}_3F_2\begin{bmatrix}\begin{gathered}-n, \tfrac12, d+1\\ 1, d \end{gathered};2\end{bmatrix}.
$$
Taking $a=\frac{1}{2}$ in~\eqref{B.1} and~\eqref{B.2} immediately leads to 
\begin{equation*}
{\,}_3F_2\begin{bmatrix}\begin{gathered}-n, \tfrac12, d+1\\ 1, d \end{gathered};2\end{bmatrix}=\begin{dcases}
\frac{\bigl(\frac12\bigr)_\nu}{(1)_\nu}\biggl(1+\frac{2\nu}{d}\biggr), & n=2\nu;\\
-\frac{1}{d}\frac{\bigl(\frac32\bigr)_\nu}{(1)_\nu}, & n=2\nu+1.
\end{dcases}
\end{equation*}
Accordingly, the identity~\eqref{Eqq3.5} is proved. Theorem~\ref{thm1} is thus proved.
\end{proof}

\begin{rem}
When $d\to\infty$ in~\eqref{Eqq3.5}, we recover~\eqref{20}. When $d=1$ in~\eqref{Eqq3.5}, we arrive at
\begin{equation*}
\sum_{k=0}^{n}(-1)^k\frac{k+1}{2^k}\binom{n}{k}\binom{2k}{k}
=\begin{dcases}
\frac{\bigl(\frac{3}{2}\bigr)_\nu}{(1)_\nu}, & n=2\nu;\\
-\frac{\bigl(\frac{3}{2}\bigr)_\nu}{(1)_\nu}, & n=2\nu+1.
\end{dcases}
\end{equation*}
\end{rem}

\subsection{An extension of Riordan's combinatorial identity}\label{sec4}

In this section, we establish an extension of Riordan's combinatorial identity.

\begin{thm}\label{thm2}
For $d\neq 0,-1,-2,\dotsc$ and $n,\nu\in\mathbb{N}_0$, we have
\begin{multline}\label{33}
\sum_{k=0}^{n}\frac{(-1)^k}{2^k}\frac{\binom{n+1}{k+1}}{\binom{k+d}{k+1}}\frac{\binom{k+d+1}{k+1}}{\binom{k+2}{k+1}}\binom{2k}{k}\\
=\begin{dcases} \frac{(2\nu+1)(d+1)}{2d}\biggl[1+\frac{2\nu}{3} \biggl(\frac{2d+1}{d+1}\biggr)\biggr]\frac{\bigl(\frac12\bigr)_\nu}{(2)_\nu}, & n=2\nu;\\
\frac{(\nu+1)(2d+1)}{3d}\frac{\bigl(\frac32\bigr)_\nu}{(2)_\nu}, & n=2\nu+1.
\end{dcases}
\end{multline}
\end{thm}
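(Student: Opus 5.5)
The plan is to follow the same route as in the proof of Theorem~\ref{thm1}. First rewrite the left-hand side of~\eqref{33} as a constant times a terminating ${\,}_3F_2$ with argument $2$. Then identify that ${\,}_3F_2$ with the Awad et al.\ results~\eqref{18} and~\eqref{19}, taking $a=\frac12$ and replacing $d$ by $d+1$.

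\emph{Conversion to Pochhammer form.} Using~\eqref{25}--\eqref{26}, I will substitute
\begin{gather*}
\binom{n+1}{k+1}=(-1)^k(n+1)\frac{(-n)_k}{(2)_k},\qquad \binom{k+d}{k+1}=\frac{d(d+1)_k}{(2)_k},\qquad \binom{k+d+1}{k+1}=\frac{(d+1)(d+2)_k}{(2)_k},\\
\binom{k+2}{k+1}=k+2=\frac{2(3)_k}{(2)_k},\qquad \binom{2k}{k}=2^{2k}\frac{(\frac12)_k}{(1)_k}.
\end{gather*}
The third identity is~\eqref{29} with $d$ replaced by $d+1$. In the $k$-th term, all the factors $(2)_k$ cancel. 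The sign $(-1)^k$ coming from $\binom{n+1}{k+1}$ cancels the explicit $(-1)^k$. The factor $2^{2k}/2^k$ leaves $2^k$. Hence the left-hand side of~\eqref{33} equals
\begin{equation*}
\frac{(n+1)(d+1)}{2d}\,{\,}_3F_2\begin{bmatrix}\begin{gathered}-n,\tfrac12,d+2\\ 3,d+1\end{gathered};2\end{bmatrix}.
\end{equation*}

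\emph{Identification with~\eqref{18} and~\eqref{19}.} This ${\,}_3F_2$ is exactly the left-hand side of~\eqref{18} (for $n=2\nu$) or~\eqref{19} (for $n=2\nu+1$) with $a=\frac12$ and $d\mapsto d+1$, since then $2a+2=3$. Therefore $1-\frac{a}{d}$ becomes $1-\frac{1}{2(d+1)}=\frac{2d+1}{2(d+1)}$, and $a+\frac32=2$, $a+1=\frac32$.

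For $n=2\nu$, formula~\eqref{18} gives
\begin{equation*}
\frac{(\frac12)_\nu}{(2)_\nu}\biggl[1+\frac{2\nu}{3}\,\frac{2d+1}{d+1}\biggr].
\end{equation*}
Multiplying by the prefactor $\frac{(2\nu+1)(d+1)}{2d}$ yields the first case of~\eqref{33}.

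For $n=2\nu+1$, formula~\eqref{19} gives
\begin{equation*}
\frac{2d+1}{3(d+1)}\,\frac{(\frac32)_\nu}{(2)_\nu}.
\end{equation*}
Multiplying by the prefactor $\frac{(2\nu+2)(d+1)}{2d}$ yields $\frac{(\nu+1)(2d+1)}{3d}\frac{(\frac32)_\nu}{(2)_\nu}$, which is the second case.

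\emph{Main obstacle.} The only delicate point is the bookkeeping in the conversion step. The signs from $\binom{n+1}{k+1}$ must cancel against the explicit $(-1)^k$, and the $(2)_k$ factors must cancel fully. The main risk is the shift $d\mapsto d+1$: one must check that the pair $(d+2)_k/(d+1)_k$ is the pattern $(d'+1)_k/(d')_k$ required by~\eqref{18} and~\eqref{19}. The hypothesis $d\notin\mathbb{Z}_0^-$ guarantees $d+1\notin\mathbb{Z}_0^-$, so these earlier results apply.
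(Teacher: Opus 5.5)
Your proposal is correct and is essentially the paper's own proof: you convert the sum to $\frac{(n+1)(d+1)}{2d}\,{}_3F_2\bigl(-n,\tfrac12,d+2;3,d+1;2\bigr)$ and then apply~\eqref{18} and~\eqref{19} with $a=\tfrac12$ and $d\mapsto d+1$. Your explicit tracking of the odd-case prefactor $\frac{(2\nu+2)(d+1)}{2d}$ fills in a step the paper leaves implicit.
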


\begin{proof}
Denote the left-hand side of~\eqref{33} by $S_2$.
Using the elementary identities in \eqref{27} and \eqref{29}, we obtain
\begin{align*}
S_2&=\frac{(n+1)(d+1)}{2d}\sum_{k=0}^{n}\frac{(-n)_k\bigl(\frac{1}{2}\bigr)_k\bigl(d+2\bigr)_k}{(3)_k(d+1)_k}\frac{2^k}{k!}\\
&=\frac{(n+1)(d+1)}{2d}{\,}_3F_2\begin{bmatrix}\begin{gathered}-n, \tfrac12, d+2\\ 3, d+1 \end{gathered};2\end{bmatrix}.
\end{align*}
Taking $a=\frac{1}{2}$ and replacing $d$ by $d+1$ in~\eqref{18} and~\eqref{19}, we obtain
\begin{equation*}
{\,}_3F_2\begin{bmatrix}\begin{gathered}-n, \tfrac12, d+2\\ 3, d+1 \end{gathered};2\end{bmatrix}
=\begin{dcases} \biggl(1+\frac{2\nu}{3}\frac{2d+1}{d+1}\biggr)\frac{\bigl(\frac12\bigr)_\nu}{(2)_\nu}, & n=2\nu;\\
\frac{2d+1}{3(d+1)}\frac{\bigl(\frac32\bigr)_\nu}{(2)_\nu}, & n=2\nu+1.
\end{dcases}
\end{equation*}
The identity~\eqref{33} is thus proved.
The proof of Theorem~\ref{thm2} is complete.
\end{proof}

\begin{rem}
Letting $d=1$ in the identity~\eqref{33}, we immediately recover Riordan's combinatorial identity~\eqref{2.2}. 
\end{rem}

\begin{rem}
Letting $d=2$ in the identity~\eqref{33} results in
\begin{equation*}
\sum_{k=0}^{n}\frac{(-1)^k}{2^k}\frac{k+3}{k+2}\binom{n+1}{k+1}{\binom{2k}{k}}
=\begin{dcases}
\frac{(2\nu+1)(10\nu+9)}{6}\frac{\bigl(\frac12\bigr)_\nu}{(2)_\nu}, & n=2\nu\\
\frac{5}{3}\frac{\bigl(\frac32\bigr)_\nu}{(1)_\nu}, &n=2\nu+1
\end{dcases}
\end{equation*}
for $n,\nu\in\mathbb{N}_0$.
\end{rem}

\subsection{An extension of Gould's combinatorial identity}\label{sec5}

In this section, we derive two extensions of Gould's combinatorial identity.

\begin{thm}\label{thm3}
For $d\neq 0,-1,-2,\dotsc$ and $n\in\mathbb{N}_0$, we have
\begin{equation}\label{Eq3.8}
\sum_{k=0}^{\lfloor n/2\rfloor}\frac{1}{2^{2k}}\binom{n}{2k}\binom{2k}{k}\frac{\binom{k+d}{k}}{\binom{k+d-1}{k}}
=\biggl[\frac{n(n-1)}{4d}+n-\frac{1}{2}\biggr]\frac{2^n\Gamma\bigl(n-\frac{1}{2}\bigr)}{\sqrt{\pi}\,\Gamma(n+1)}.
\end{equation}
\end{thm}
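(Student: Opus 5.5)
The plan is to rewrite the left-hand side of \eqref{Eq3.8} as a terminating ${\,}_3F_2$ at unit argument and evaluate it with the extended Gauss summation theorem~\eqref{4}.

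First, convert each factor of the summand into Pochhammer symbols.
\begin{itemize}
\item By~\eqref{29}, $\binom{n}{2k}=\frac{(-\frac n2)_k(-\frac n2+\frac12)_k}{(\frac12)_k(1)_k}$.
\item By~\eqref{25}, $\frac{1}{2^{2k}}\binom{2k}{k}=\frac{(\frac12)_k}{(1)_k}$.
\item By~\eqref{26}, $\binom{k+d}{k}=\frac{(d+1)_k}{(1)_k}$ and $\binom{k+d-1}{k}=\frac{(d)_k}{(1)_k}$, so the ratio of these two binomial coefficients is $\frac{(d+1)_k}{(d)_k}$.
\end{itemize}
The factors $(\frac12)_k$ cancel. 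Since $(-\frac n2)_k(\frac{1-n}{2})_k=0$ for $k>\lfloor n/2\rfloor$, the sum extends to all $k\ge0$. Hence the left-hand side equals
\[
{\,}_3F_2\begin{bmatrix}\begin{gathered}-\tfrac n2,\tfrac{1-n}2,d+1\\ 1,d\end{gathered};1\end{bmatrix}.
\]

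Next, apply~\eqref{4} with $a=-\frac n2$, $b=\frac{1-n}{2}$ and $c=0$, so that $c+1=1$. Then $c-a-b=n-\frac12$, which is positive for $n\ge1$, and $ab=\frac{n(n-1)}{4}$. Formula~\eqref{4} gives
\[
\biggl[n-\frac12+\frac{n(n-1)}{4d}\biggr]\frac{\Gamma(1)\Gamma\bigl(n-\frac12\bigr)}{\Gamma\bigl(1+\frac n2\bigr)\Gamma\bigl(\frac{n+1}2\bigr)}.
\]
Legendre's duplication formula, $\Gamma\bigl(\frac{n+1}{2}\bigr)\Gamma\bigl(\frac n2+1\bigr)=2^{-n}\sqrt{\pi}\,\Gamma(n+1)$, turns the denominator into $2^{-n}\sqrt\pi\,\Gamma(n+1)$. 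This yields exactly the right-hand side of~\eqref{Eq3.8}.

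The main obstacle is the hypothesis of~\eqref{4}. The series here terminates, but the convergence condition $\Re(c-a-b)>0$ fails at $n=0$, where $c-a-b=-\frac12$. I will therefore verify $n=0$ directly.
\begin{itemize}
\item The left-hand side is $1$.
\item The right-hand side is $\bigl(-\frac12\bigr)\frac{\Gamma(-\frac12)}{\sqrt\pi}=\bigl(-\frac12\bigr)(-2)=1$.
\end{itemize}
For every $n\ge1$ the hypothesis $\Re(c-a-b)>0$ holds. The condition on $d$ in~\eqref{4} matches the assumption $d\ne0,-1,-2,\dotsc$ of the theorem.

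The only other point to check is that $c=0$ is admissible in~\eqref{4}. This is fine because only $c+1=1$ appears as a denominator parameter and $\Gamma(c+1)=1$.
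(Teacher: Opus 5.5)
Your proposal is correct and is essentially the paper's proof. Both rewrite the sum as ${}_3F_2\bigl[-\tfrac n2,\tfrac{1-n}{2},d+1;1,d;1\bigr]$ and apply the extended Gauss theorem~\eqref{4} with $a=-\tfrac n2$, $b=\tfrac{1-n}{2}$, $c=0$; your explicit use of Legendre's duplication formula and your separate check of $n=0$ (where $\Re(c-a-b)=-\tfrac12$ violates the hypothesis of~\eqref{4}) supply details the paper leaves implicit.
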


\begin{proof}
Denote the left-hand side of~\eqref{Eq3.8} by $S_3$. Converting all binomial coefficients involved in $S_3$ to the Pochhammer symbols by employing the appropriate elementary identities between~\eqref{25} and~\eqref{26} results in
$$
S_3=\sum_{k=0}^{\lfloor n/2\rfloor}\frac{1}{2^{2k}}\binom{n}{2k}\binom{2k}{k}\frac{\binom{k+d}{k}}{\binom{k+d-1}{k}}
={\,}_3F_2\begin{bmatrix}\begin{gathered}-\tfrac{n}{2}, -\tfrac{n}2+\tfrac12, d+1\\ 1, d \end{gathered};1\end{bmatrix}.
$$
Taking $a=-\frac{n}2$, $b=-\frac{n}2+\frac12$, and $c=0$ in~\eqref{4}, we readily arrive at
\begin{equation*}
{\,}_3F_2\begin{bmatrix}\begin{gathered}-\tfrac{n}{2}, -\tfrac{n}2+\tfrac12, d+1\\ 1, d \end{gathered};1\end{bmatrix}
=\biggl[\frac{n(n-1)}{4d}+n-\frac{1}{2}\biggr]\frac{2^n\Gamma\bigl(n-\frac{1}{2}\bigr)}{\sqrt{\pi}\,\Gamma(n+1)}.
\end{equation*}
The identity~\eqref{Eq3.8} is thus proved. The proof of Theorem~\ref{thm3} is complete.
\end{proof}

\begin{rem}
When $d\to\infty$ in~\eqref{Eq3.8}, we recover Gould's combinatorial identity~\eqref{22}. When $d=1$ in~\eqref{Eq3.8}, we acquire
\begin{equation*}
\sum_{k=0}^{\lfloor n/2\rfloor}\frac{k+1}{2^{2k}}\binom{n}{2k}\binom{2k}{k}
=2^{n-2}\bigl(n^2+3n-2\bigr)\frac{\Gamma\bigl(n-\frac{1}{2}\bigr)}{\sqrt{\pi}\,\Gamma(n+1)}.
\end{equation*}
\end{rem}

\subsection{An extension of Touchard's combinatorial identity}\label{sec6}

In this section, we establish an extension of Touchard's combinatorial identity.

\begin{thm}\label{thm4}
For $d\neq 0,-1,-2,\dotsc$ and $n\in\mathbb{N}_0$, we have
\begin{equation}\label{35}
\sum_{k=0}^{\lfloor n/2\rfloor}\frac{1}{2^{2k}}\frac{\binom{k+d+1}{k}}{\binom{k+d}{k}} \frac{\binom{2k}{k}}{\binom{k+2}{k}}\binom{n}{2k}
=\frac{2^{n+1}}{d+1}\frac{n^{2}+n(4d+3)+6(d+1)}{(n+3)(n+4)}\frac{(\frac{3}{2})_{n}}{(3)_{n}}.
\end{equation}
\end{thm}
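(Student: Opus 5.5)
The plan is the same as for Theorem~\ref{thm3}: rewrite the sum as a terminating ${\,}_3F_2$ at unit argument, evaluate it with the extended Gauss summation~\eqref{4}, and then simplify the gamma factors.

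\emph{Step 1: conversion to Pochhammer symbols.} By~\eqref{26}, the first ratio is
\[
\frac{\binom{k+d+1}{k}}{\binom{k+d}{k}}=\frac{(d+2)_k}{(d+1)_k}.
\]
Since $\binom{k+2}{k}=\frac{(3)_k}{(1)_k}$ and $\binom{2k}{k}=2^{2k}\frac{(\frac12)_k}{(1)_k}$ by~\eqref{25}, the second ratio is
\[
\frac{\binom{2k}{k}}{\binom{k+2}{k}}=2^{2k}\frac{(\frac12)_k}{(3)_k}.
\]
Finally~\eqref{29} gives $\binom{n}{2k}=\frac{(-\frac n2)_k(-\frac n2+\frac12)_k}{(\frac12)_k(1)_k}$. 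In the product, the $(\frac12)_k$ factors cancel and the $2^{2k}$ cancels the prefactor $2^{-2k}$. The sum therefore becomes
\[
{\,}_3F_2\begin{bmatrix}\begin{gathered}-\tfrac n2,-\tfrac n2+\tfrac12,d+2\\ 3,d+1\end{gathered};1\end{bmatrix}.
\]
Exactly one of $-\frac n2$ and $-\frac n2+\frac12$ is a nonpositive integer, so the series terminates at $k=\lfloor n/2\rfloor$. This matches the range of the original sum.

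\emph{Step 2: apply~\eqref{4}.} Take $a=-\frac n2$, $b=\frac{1-n}{2}$, $c=2$, and replace $d$ by $d+1$. The hypotheses hold: $c-a-b=n+\frac32>0$, and $d+1\notin\mathbb{Z}_0^-$ because $d\notin\mathbb{Z}_0^-$. Since $ab=\frac{n(n-1)}{4}$, formula~\eqref{4} gives
\[
\biggl(n+\frac32+\frac{n(n-1)}{4(d+1)}\biggr)\frac{\Gamma(3)\,\Gamma\bigl(n+\frac32\bigr)}{\Gamma\bigl(3+\frac n2\bigr)\Gamma\bigl(\frac{n+5}{2}\bigr)}.
\]
The scalar factor equals $\frac{n^2+n(4d+3)+6(d+1)}{4(d+1)}$, which produces the quadratic numerator in~\eqref{35}.

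\emph{Step 3: simplify the gamma ratio.} This is the only place needing care, and it is bookkeeping rather than a real obstacle. Legendre's duplication formula gives
\[
\Gamma\bigl(\tfrac n2+\tfrac52\bigr)\Gamma\bigl(\tfrac n2+3\bigr)=2^{-(n+4)}\sqrt{\pi}\,(n+4)!.
\]
We also use $\Gamma\bigl(n+\frac32\bigr)=\frac{\sqrt\pi}{2}\bigl(\frac32\bigr)_n$. Then $(3)_n=\frac{(n+2)!}{2}$ gives $(n+4)!=2(3)_n(n+3)(n+4)$. Together these turn the gamma ratio into
\[
\frac{2^{n+3}\bigl(\frac32\bigr)_n}{(3)_n(n+3)(n+4)}.
\]
Multiplying by the scalar factor from Step~2 gives exactly the right-hand side of~\eqref{35}.

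As a consistency check, letting $d\to\infty$ should recover Touchard's identity~\eqref{23}.
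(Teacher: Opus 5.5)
Your proof is correct and follows the paper's argument. You reduce the sum to ${}_3F_2\bigl[-\tfrac n2,\tfrac{1-n}{2},d+2;3,d+1;1\bigr]$ and apply~\eqref{4} with $c=2$ and $d$ replaced by $d+1$, and your duplication-formula bookkeeping supplies the simplification the paper leaves implicit.

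There is one slip in your closing remark: letting $d\to\infty$ does not give Touchard's identity. At $n=2$ that limit gives $7/6$ rather than $5/4$. It is $d=1$ that recovers~\eqref{23}, since then $\frac{\binom{k+2}{k}}{\binom{k+1}{k}}\cdot\frac{\binom{2k}{k}}{\binom{k+2}{k}}=C_k$.
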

\begin{proof}
Denote the left-hand side of~\eqref{35} by $S_4$. By virtue of the first one in~\eqref{29}, \eqref{25}, the first one in~\eqref{27}, and the first one in~\eqref{26}, converting all binomial coefficients involved in $S_4$ to the Pochhammer symbols yields
$$
S_4=\sum_{k=0}^{\lfloor n/2\rfloor} \frac{\bigl(-\frac{n}2\bigr)_k\bigl(-\frac{n}2+\frac12\bigr)_n\bigl(d+2\bigr)_k}{(3)_k(d+1)_k}\frac{2^k}{k!}
={\,}_3F_2\begin{bmatrix}\begin{gathered}-\tfrac{n}{2}, -\tfrac{n}2+\tfrac12, d+2\\ 3, d+1 \end{gathered};1\end{bmatrix}.
$$
Further taking $a=-\frac{n}2$, $b=-\frac{n}2+\frac12$, and $c=2$ in the extended Gauss summation theorem~\eqref{4} leads to the identity~\eqref{35}.
The proof of Theorem~\ref{thm4} is complete.
\end{proof}

\begin{rem}
Letting $d=1$ in the identity~\eqref{35}, we recover Touchard's combinatorial identity~\eqref{23} readily.
\end{rem}

We conclude this section by noting that one may ask whether there exists a single identity that encompasses Knuth's old sum (also known as Reed--Dawson's identity) and Riordan's combinatorial identities, and likewise includes Gould's identity and Touchard's identity as special cases. The next section provides an affirmative answer.

\section{Further extensions of four famous combinatorial identities}\label{section5}

In this section, we now start out to extend the identities~\eqref{20} to~\eqref{23} in the following two theorems.

\subsection{Extension of Knuth's old sum and Riordan's combinatorial identity}

In this section, we present an interesting combinatorial identity that includes Knuth's old sum and Riordan's combinatorial identity.

\begin{thm}\label{5.1thm}
For $d \neq 0,-1,-2,\dotsc$ and $n,\nu \in \mathbb{N}_0$, we have
\begin{equation}\label{(5.1)}
\sum_{k=0}^{n}\frac{(-1)^k}{2^k(k+1)}
\binom{n}{k}\binom{2k}{k}
\frac{\binom{k+d}{k}}{\binom{k+d-1}{k}}
=
\begin{dcases}
\frac{\bigl(\frac12\bigr)_\nu}{(1)_\nu}, & n=2\nu; \\
\frac12\biggl(1-\frac{1}{d}\biggr)
\frac{\bigl(\frac32\bigr)_\nu}{(2)_\nu}, & n=2\nu+1.
\end{dcases}
\end{equation}
\end{thm}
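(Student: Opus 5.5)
The plan is to rewrite the left-hand side of~\eqref{(5.1)} as a terminating ${}_3F_2$ at argument $2$, and then read off its value from the results~\eqref{15} and~\eqref{16} of Kim et al., which were derived from the extension~\eqref{14} of Kummer's second theorem. This follows the pattern used for Theorems~\ref{thm1} and~\ref{thm2}.

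\emph{Step 1: conversion to Pochhammer form.} Using~\eqref{25} and~\eqref{26}, I will write
\begin{equation*}
(-1)^k\binom{n}{k}=\frac{(-n)_k}{k!},\qquad \binom{2k}{k}=2^{2k}\frac{\bigl(\frac12\bigr)_k}{k!},\qquad \frac{\binom{k+d}{k}}{\binom{k+d-1}{k}}=\frac{(d+1)_k}{(d)_k},
\end{equation*}
and absorb the factor $\frac{1}{k+1}$ as $\frac{(1)_k}{(2)_k}$. The factor $(1)_k$ cancels one of the two factorials $k!$ in the denominator. The powers of $2$ combine as $2^{-k}2^{2k}=2^k$. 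The $k$-th summand therefore becomes
\begin{equation*}
\frac{(-n)_k\bigl(\frac12\bigr)_k(d+1)_k}{(2)_k(d)_k}\frac{2^k}{k!},
\end{equation*}
so the left-hand side equals
\begin{equation*}
{\,}_3F_2\begin{bmatrix}\begin{gathered}-n,\tfrac12,d+1\\ 2,d\end{gathered};2\end{bmatrix}.
\end{equation*}

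\emph{Step 2: specialization of~\eqref{15} and~\eqref{16}.} Taking $a=\frac12$, we have $2a+1=2$, which is exactly the shape of this ${}_3F_2$. With $n=2\nu$, formula~\eqref{15} gives $\frac{(\frac12)_\nu}{(1)_\nu}$, since $a+\frac12=1$. With $n=2\nu+1$, formula~\eqref{16} gives
\begin{equation*}
\frac{\bigl(1-\frac1d\bigr)\bigl(\frac32\bigr)_\nu}{2\,(2)_\nu},
\end{equation*}
since $2a=1$, $2a+1=2$ and $a+\frac32=2$. These are exactly the two cases on the right-hand side of~\eqref{(5.1)}.

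The only place needing care is Step 1: one must check that $\frac{1}{k+1}$ combines correctly with the factorials, so that the lower parameter becomes $2$ rather than $1$. Once the lower parameter $2=2a+1$ with $a=\frac12$ is recognized, the rest is direct substitution. The hypothesis $d\notin\mathbb{Z}_0^-$ is exactly what~\eqref{15} and~\eqref{16} require.
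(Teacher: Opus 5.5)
Your proposal is correct and is essentially the paper's proof. You rewrite the sum as the terminating ${}_3F_2$ with upper parameters $-n,\tfrac12,d+1$, lower parameters $2,d$ and argument $2$, absorbing $\frac{1}{k+1}$ as $\frac{(1)_k}{(2)_k}$, and then read off both parity cases from \eqref{15} and \eqref{16} with $a=\frac12$.
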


\begin{proof}
Denote the left-hand side of~\eqref{(5.1)} by $S_1$. Converting all binomial coefficients involved in $S_1$ to the Pochhammer symbols by employing the appropriate elementary identities~\eqref{25} to~\eqref{26} results in
\[
S_1
= \sum_{k=0}^{n}
\frac{(-n)_k\bigl(\frac12\bigr)_k(d+1)_k}{(2)_k(d)_k}\frac{2^k}{k!}
= {}_3F_2
\begin{bmatrix}
\begin{gathered}-n,\tfrac12,d+1 \\
2,d\end{gathered}; 2
\end{bmatrix}.
\]
Taking $a=\frac12$ in~\eqref{15} and~\eqref{16} immediately leads to
\[
{}_3F_2\begin{bmatrix}\begin{gathered}-n, \tfrac{1}{2}, d+1\\
2,d\end{gathered};2\end{bmatrix}
=
\begin{dcases}
\frac{\bigl(\frac12\bigr)_\nu}{(1)_\nu}, & n=2\nu; \\
\frac12\biggl(1-\frac{1}{d}\biggr)
\frac{\bigl(\frac32\bigr)_\nu}{(2)_\nu}, & n=2\nu+1.
\end{dcases}
\]
Accordingly, the identity~\eqref{(5.1)} is proved. Theorem~\ref{5.1thm} is thus proved.
\end{proof}

\begin{rem}
It is interesting that the identity~\eqref{(5.1)} in Theorem~\ref{5.1thm} for the case \( n=2\nu \) is independent of \( d \).
\end{rem}

\begin{rem}
For \( d=1 \) in~\eqref{(5.1)}, we immediately recover Knuth's old sum~\eqref{20}, while taking the limit \( d \to \infty \) in~\eqref{(5.1)} yields Riordan's identity~\eqref{2.2}.
\end{rem}

\begin{rem}
Letting \( d=\tfrac{1}{2} \) in the identity~\eqref{(5.1)}, we acquire
\[
\sum_{k=0}^{n} (-1)^k\frac{2k+1}{2^k (k+1)}\binom{n}{k}\binom{2k}{k}
=
\begin{dcases}
\frac{\bigl(\frac12\bigr)_\nu}{(1)_\nu}, & n=2\nu; \\
-\frac{1}{2}\frac{\bigl(\frac32\bigr)_\nu}{(2)_\nu}, & n=2\nu+1.
\end{dcases}
\]
\end{rem}

\subsection{Extension of Gould's identity and Touchard's identity}

In this section, we present an interesting combinatorial identity that
includes Gould's identity and Touchard's identity.

\begin{thm}\label{5.2thm}
For $d \neq 0,-1,-2,\dotsc$ and $n\in\mathbb{N}_0$, we have
\begin{equation}\label{5.2}
\sum_{k=0}^{n}\frac{1}{2^{2k}(k+1)}\binom{n}{2k}\binom{2k}{k}\frac{\binom{k+d}{k}}{\binom{k+d-1}{k}}
=\frac{1}{d}\frac{n(n-1)+2d(2n+1)}{2^{n}(n+1)(n+2)}\binom{2n}{n}.
\end{equation}
\end{thm}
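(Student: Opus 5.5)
The plan is to follow the pattern of Theorems~\ref{thm3} and~\ref{thm4}. First I rewrite the left-hand side of~\eqref{5.2} as a terminating ${}_3F_2$ at argument $1$. Then I evaluate it with the extended Gauss summation theorem~\eqref{4}, this time with $c=1$. Although the sum in~\eqref{5.2} is written up to $k=n$, $\binom{n}{2k}=0$ whenever $2k>n$, so it agrees with the sum up to $\lfloor n/2\rfloor$.

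\emph{Step 1 (conversion).} By~\eqref{29} and~\eqref{25},
\[
\binom{n}{2k}\binom{2k}{k}\frac{1}{2^{2k}}=\frac{\bigl(-\frac n2\bigr)_k\bigl(-\frac n2+\frac12\bigr)_k}{\bigl(\frac12\bigr)_k(1)_k}\cdot\frac{\bigl(\frac12\bigr)_k}{(1)_k}=\frac{\bigl(-\frac n2\bigr)_k\bigl(-\frac n2+\frac12\bigr)_k}{(1)_k\,k!}.
\]
I also use $\frac{1}{k+1}=\frac{(1)_k}{(2)_k}$, and, from~\eqref{26}, $\binom{k+d}{k}\big/\binom{k+d-1}{k}=\frac{(d+1)_k}{(d)_k}$. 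Together these give
\[
S={}_3F_2\begin{bmatrix}\begin{gathered}-\tfrac n2,\,-\tfrac n2+\tfrac12,\,d+1\\ 2,\,d\end{gathered};1\end{bmatrix}.
\]
The series terminates because one of $-\frac n2$ and $-\frac n2+\frac12$ is a nonpositive integer.

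\emph{Step 2 (summation).} I apply~\eqref{4} with $a=-\frac n2$, $b=-\frac n2+\frac12$ and $c=1$. Then $c-a-b=n+\frac12$, whose real part is positive, and $\frac{ab}{d}=\frac{n(n-1)}{4d}$. This gives
\[
S=\Bigl(n+\tfrac12+\tfrac{n(n-1)}{4d}\Bigr)\frac{\Gamma(2)\,\Gamma\bigl(n+\frac12\bigr)}{\Gamma\bigl(\frac n2+2\bigr)\,\Gamma\bigl(\frac n2+\frac32\bigr)}.
\]

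\emph{Step 3 (simplification).} This is the only step requiring real care, since it is where the gamma factors must be turned into the binomial form. By the Legendre duplication formula,
\[
\Gamma\bigl(\tfrac n2+\tfrac32\bigr)\Gamma\bigl(\tfrac n2+2\bigr)=2^{-n-2}\sqrt{\pi}\,\Gamma(n+3).
\]
I also use $\Gamma\bigl(n+\frac12\bigr)=\sqrt{\pi}\,\dfrac{(2n)!}{4^n n!}$. The gamma ratio then becomes
\[
\frac{2^{n+2}(2n)!}{4^n\,n!\,(n+2)!}=\frac{4}{2^n(n+1)(n+2)}\binom{2n}{n}.
\]
Writing the prefactor as $\frac{n(n-1)+2d(2n+1)}{4d}$, the factors of $4$ cancel and~\eqref{5.2} follows.

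The hypothesis $d\notin\mathbb{Z}_0^-$ is exactly what~\eqref{4} requires. As checks, setting $d=1$ should recover Gould's identity~\eqref{22}, and letting $d\to\infty$ should recover Touchard's identity~\eqref{23}.
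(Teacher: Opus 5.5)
Your proposal is correct and takes the same route as the paper: you rewrite the sum as ${}_3F_2\bigl[-\tfrac n2,-\tfrac n2+\tfrac12,d+1;2,d;1\bigr]$ and evaluate it by the extended Gauss theorem~\eqref{4} with $c=1$. Your Step~3 supplies the simplification the paper leaves to the reader, using the Legendre duplication formula to turn the gamma quotient into $\frac{4}{2^n(n+1)(n+2)}\binom{2n}{n}$, and that computation is correct.
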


\begin{proof}
Denote the left-hand side of~\eqref{5.2} by $S_2$. Converting all binomial coefficients involved in $S_2$ to the Pochhammer symbols by employing the appropriate elementary identities~\eqref{25} to~\eqref{26} results in
\[
S_2
= \sum_{k=0}^{\lfloor n/2 \rfloor}
\frac{\bigl(-\tfrac{n}{2}\bigr)_k\bigl(-\tfrac{n}{2}+\tfrac12\bigr)_k(d+1)_k}{(2)_k(d)_k}\frac{1}{k!}
= {}_3F_2\begin{bmatrix}
\begin{gathered}
-\tfrac{n}{2},-\tfrac{n}{2}+\tfrac12,d+1\\ 2,d
\end{gathered};1
\end{bmatrix}.
\]
Taking \(a=-\frac{n}{2}\), \(b=-\frac{n}{2}+\frac12\), and \(c=1\) in extended Gauss summation theorem~\eqref{4}, we readily arrive at
\[
{}_3F_2\begin{bmatrix}
\begin{gathered}
-\tfrac{n}{2},-\tfrac{n}{2}+\tfrac12,d+1\\ 2,d
\end{gathered};1
\end{bmatrix}=
\frac{1}{d}\frac{n(n-1)+2d(2n+1)}{2^{n}(n+1)(n+2)}\binom{2n}{n}.
\]
The identity~\eqref{5.2} is thus proved. The proof of Theorem~\ref{5.2thm} is complete.
\end{proof}

\begin{rem}
Setting $d = 1$ in the identity~\eqref{5.2} immediately recovers Gould's identity~\eqref{22}, while taking the limit $d \to \infty$ in the identity~\eqref{5.2} yields Touchard's identity~\eqref{23}.
\end{rem}

\section{Conclusions}

In this paper, we extended Kummer's second theorem~\eqref{5} and the following four well-known combinatorial identities:
\begin{enumerate}
\item 
Knuth's old sum~\eqref{20},
\item 
Riordan's combinatorial identity~\eqref{2.2},
\item 
Gould's combinatorial identity~\eqref{22},
\item 
Touchard's combinatorial identity~\eqref{23}.
\end{enumerate}
These extensions are established by employing the extensions of Gauss's summation theorem~\eqref{4} and the known results~\eqref{15}, \eqref{16}, \eqref{18}, and~\eqref{19} for terminating ${}_3F_{2}$ series. Owing to the presence of the general parameter $d$, the results presented in this paper are of a very general character. Consequently, they serve as master formulas from which a large number of new and interesting results, including many known special cases, can be derived. We believe that the identities established in this paper are new and do not appear in the existing literature, thereby constituting a genuine contribution to the theory of generalized hypergeometric identities. It is hoped that these results will prove useful in various areas of applied mathematics.
\par
We conclude this paper by noting that the natural extensions of the aforementioned generalizations of Knuth's old sum, Riordan's identity, Gould's identity, and Touchard's identity are currently under investigation and will be presented in a subsequent paper.
\par
By the way, in the papers~\cite{Gauss-2Formula.tex, DA19034-cas-sc.tex}, with the help of the Fa\`a di Bruno formula and several identities of partial Bell polynomials, by establishing and comparing different forms for explicit formulas of the Gauss hypergeometric functions
\begin{gather*}
{\,}_2F_1\biggl(\frac{1-n}{2},\frac{2-n}{2};\frac{3}{2}-m;z^2\biggr), \quad
{\,}_2F_1\biggl(-\frac{n}{2},\frac{1-n}{2};\frac{1}{2}-m;z^2\biggr),\\
{\,}_2F_1\biggl(a,a+\frac{1}{2};\frac{3}{2}-m;z^2\biggr), \quad
{\,}_2F_1\biggl(a,a+\frac{1}{2};\frac{1}{2}-m;z^2\biggr)
\end{gather*}
for $m,n\in\mathbb{N}$ and $a\in\mathbb{C}$, the authors discovered the following new combinatorial identities
\begin{align*}
\sum_{k=0}^{n} \frac{2^{k}}{k!}\binom{2n-2k}{n-k}
\sum_{j=0}^{k}\frac{(-1)^{j}}{2^j}
\frac{(2k-2j-1)!!}{(n-j)!}
\binom{2k-j-1}{j-1}
&=\frac{1}{n!},\\
\sum_{k=0}^{m}\frac{2^k}{k!} \binom{2m-2k}{m-k}
\sum_{\ell=0}^{k} \frac{(-1)^\ell}{2^\ell}
\frac{(2k-2\ell-1)!!}{(n-\ell)!}
\binom{2k-\ell-1}{\ell-1}
&=\frac{1}{n!}\binom{2m-n}{m},\\
\intertext{and}
\sum_{k=1}^{m}\frac{1}{(k!)^2}\binom{2m-2k}{m-k}
\sum_{\ell=1}^{k} \binom{k}{\ell}\ell(2k-\ell-1)! (2a)_\ell
&=\binom{2m+2a}{m},
\end{align*}
where $m\in\mathbb{N}_0$, $n\in\mathbb{Z}$, and $a\in\mathbb{C}$.
\par
This paper is a slightly modified version of the arXiv preprint at \url{https://arxiv.org/abs/2608.05192v1}.

\section{Declarations}

\paragraph{\bf Authors' Contributions}
All authors contributed equally to the manuscript and read and approved the final manuscript.

\paragraph{\bf Funding}
The second author was partially supported by the Natural Science Foundation of Inner Mongolia Autonomous Region (Grant No.~2025QN01041) and by the Youth Project of Hulunbuir City for Basic Research and Applied Basic Research (Grant No.~GH2024020).

\paragraph{\bf Institutional Review Board Statement}
Not applicable.

\paragraph{\bf Informed Consent Statement}
Not applicable.

\paragraph{\bf Ethical Approval}
The conducted research is not related to either human or animal use.

\paragraph{\bf Availability of Data and Material}
Data sharing is not applicable to this article as no new data were created or analyzed in this study.

\paragraph{\bf Competing Interests}
The authors declare that they have no any conflict of competing interests.

\paragraph{\bf Use of AI Tools Declaration}
The authors declare they have not used Artificial Intelligence (AI) tools in the creation of this article.

\paragraph{\bf Acknowledgements}
Not applicable.


\begin{thebibliography}{99}

\bibitem{1}
G. E. Andrews, \textit{Applications of basic hypergeometric functions}, SIAM Rev. \textbf{16} (1974), 441\nobreakdash--484. DOI: \url{https://doi.org/10.1137/1016081}.

\bibitem{3}
M. M. Awad, A. O. Mohammed, M. A. Rakha, and A. K. Rathie, \textit{On an interesting extension of Kummer's second theorem with applications}, Commun. Korean Math. Soc. \textbf{36} (2021), no.~1, 63\nobreakdash--101. DOI: \url{https://doi.org/10.4134/CKMS.c200147}.

\bibitem{5}
J. Choi, A. K. Rathie, and H. V. Harsh, \textit{A note on Reed Dawson identities}, Korean J. Math. Sci. \textbf{11} (2004), no.~2, 1\nobreakdash--4.

\bibitem{7}
C. F. Gauss, \textit{Disquisitiones generales circa seriem infinitam}, Gottingen, published in Ges Werke Gottingen (1866), Vol.~II, 437\nobreakdash--445; III 123\nobreakdash--163; III 207\nobreakdash--229; III 446\nobreakdash--460.

\bibitem{8}
H. W. Gould, \emph{Combinatorial Identities}, A standardized set of tables listing 500 binomial coefficient summations, Morgantown, WV, 1972.

\bibitem{10}
D. H. Greene, and D. E. Knuth, \textit{Mathematics for the Analysis of Algorithms}, Reprint of the third (1990) edition. Modern Birkh\"auser Classics. Birkh\"auser Boston, Inc., Boston, MA, 2008.

\bibitem{11}
H. Izbicki, \textit{\"Uber Unterb\"aume eines Baumes}, Monatsh. Math. \textbf{74} (1970), 56\nobreakdash--62. DOI: \url{https://doi.org/10.1007/BF01298302}. (German)

\bibitem{12}
A. T. Jonassen and D. E. Knuth, \textit{A trivial algorithm whose analysis isn't}, J. Comput. System Sci. \textbf{16} (1978), no.~3, 301\nobreakdash--322. DOI: \url{https://doi.org/10.1016/0022-0000(78)90020-X}.

\bibitem{13}
I. Kim, G. V. Milovanovi\'c, R. B. Paris, and A. K. Rathie, \textit{A note on a further extension of Gauss's second summation theorem with an application to the extension of two well-known combinatorial identities}, Quaest. Math. \textbf{45} (2022), no.~6, 959\nobreakdash--968. DOI: \url{https://doi.org/10.2989/16073606.2021.1925368}.

\bibitem{14}
Y. S. Kim, J. Choi, and A. K. Rathie, \textit{Two results for the terminating $_3F_2(2)$ with applications}, Bull. Korean Math. Soc. \textbf{49} (2012), no.~3, 621\nobreakdash--633. DOI: \url{https://doi.org/10.4134/BKMS.2012.49.3.621}.

\bibitem{15}
Y. S. Kim, M. A. Rakha, and A. K. Rathie, \textit{Generalization of Kummer's second theorem with applications}, translated from Zh. Vychisl. Mat. Mat. Fiz. \textbf{50} (2010), no.~3, 407\nobreakdash--422; Comptut. Math. Math. Phys. \textbf{50} (2010), no.~3, 387\nobreakdash--402. DOI: \url{https://doi.org/10.1134/S0965542510030024}.

\bibitem{16}
Y. S. Kim and A. K. Rathie, \textit{Some results for terminating ${\,}_2F_1(2)$ series}, J. Inequ. Appl. \textbf{2013}, 2013:365, 12~pp. DOI: \url{https://doi.org/10.1186/1029-242X-2013-365}.

\bibitem{17}
Y. S. Kim, A. K. Rathie, and R. B. Paris, \textit{Evaluations of some terminating hypergeometric ${\,}_2F_1(2)$ series with applications}, Turkish J. Math. \textbf{42} (2018), no.~5, 2563\nobreakdash--2575. DOI: \url{https://doi.org/10.3906/mat-1804-67}.

\bibitem{18}
T. Koshy, \emph{Catalan Numbers with Applications}, Oxford University Press, Oxford, 2009.

\bibitem{19}
D. Lim, \textit{A note on a generalization of Riordan's combinatorial identity via a hypergeometric series approach}, Notes Number Theory Discrete Math. \textbf{29} (2023), no.~3, 421\nobreakdash--425. DOI: \url{https://doi.org/10.7546/nntdm.2023.29.3.421-425}.

\bibitem{21}
H. Prodinger, \textit{Knuth's old sum: A survey}, Bull. Eur. Assoc. Theor. Comput. Sci. \textbf{52} (1992), 232\nobreakdash--245.

\bibitem{20}
H. Prodinger, \textit{Some information about the binomial transform}, Fibonacci Quart. \textbf{32} (1994), no.~5, 412\nobreakdash--415.

\bibitem{22}
A. P. Prudnikov, Yu. A. Brychkov, and O. I. Marichev, \emph{Integrals and Series}, Vol.~3. \emph{More Special Functions}. Translated from the Russian by G. G. Gould. Gordon and Breach Science Publishers, New York, 1990.

\bibitem{Gauss-2Formula.tex}
F. Qi, \textit{Combinatorial identities derived from explicit formulas of Gauss hypergeometric functions}, arXiv:2607.10643. DOI: \url{https://doi.org/10.48550/arXiv.2607.10643}.

\bibitem{Catalan-Int-Surv.tex}
F. Qi and B.-N. Guo, \textit{Integral representations of the Catalan numbers and their applications}, Mathematics \textbf{5} (2017), no.~3, Art.~40, 31~pp. DOI: \url{https://doi.org/10.3390/math5030040}.

\bibitem{DA19034-cas-sc.tex}
F. Qi, C.-Y. He, and D. Lim, \textit{Explicit formulas of two Gauss hypergeometric functions and several combinatorial identities}, Discrete Appl. Math. \textbf{393} (2026), 215\nobreakdash--229. DOI: \url{https://doi.org/10.1016/j.dam.2026.06.023}.

\bibitem{ScienceAsia-2022-0169.tex}
F. Qi and D. Lim, \textit{Integral representations and properties of several finite sums containing central binomial coefficients}, ScienceAsia \textbf{49} (2023), no.~2, 205\nobreakdash--211. DOI: \url{https://doi.org/10.2306/scienceasia1513-1874.2022.137}.

\bibitem{arcsin-power-wei.tex}
F. Qi, D.-W. Niu, and D. Lim, \textit{Some combinatorial identities containing central binomial coefficients or Catalan numbers}, Appl. Math. Sci. Eng. \textbf{31} (2023), no.~1, Paper No.~2204233, 12~pp. DOI: \url{https://doi.org/10.1080/27690911.2023.2204233}.

\bibitem{23}
E. D. Rainville, \textit{Special Functions}, Reprint of 1960 first edition. Chelsea Publishing Co., Bronx, NY, 1971.

\bibitem{24}
A. K. Rathie, I. Kim, and R. B. Paris, \textit{A note on a generalization of two well known combinatorial identities via a hypergeometric series approach}, Integers \textbf{22} (2022), Paper No.~A28, 6 pp.

\bibitem{27}
A. K. Rathie and D. Lim, \textit{A note on generalization of combinatorial identities due to Gould and Touchard}, Axioms \textbf{12} (2023), no.~3, Paper No.~268, 4~pp. DOI: \url{https://doi.org/10.3390/axioms12030268}.

\bibitem{25}
A. K. Rathie and V. Nagar, \textit{On Kummer's second theorem involving product of generalized hypergeometric series}, Matematiche (Catania) \textbf{50} (1995), no.~1, 35\nobreakdash--38.

\bibitem{26}
A. K. Rathie and T. K. Pog\'any, \textit{New summation formula for ${\,}_3F_2\bigl(\frac12\bigr)$ and a Kummer-type II transformation of ${\,}_2F_2(x)$}, Math. Commun. \textbf{13} (2008), no.~1, 63\nobreakdash--66.

\bibitem{29}
J. Riordan, \textit{A note on Catalan parentheses}, Amer. Math. Monthly \textbf{80} (1973), 904\nobreakdash--906. DOI: \url{https://doi.org/10.2307/2319398}.

\bibitem{28}
J. Riordan, \emph{Combinatorial Identities}, Reprint of the 1968 original, Robert E. Krieger Publishing Co., Huntington, N.Y., 1979.

\bibitem{31}
L. W. Shapiro, \textit{A short proof of an identity of Touchard's concerning Catalan numbers}, J. Combinatorial Theory Ser. A \textbf{20} (1976), no.~3, 375\nobreakdash--376. DOI: \url{https://doi.org/10.1016/0097-3165(76)90034-0}.

\bibitem{30}
L. W. Shapiro, \textit{Catalan numbers and total information numbers}, Proceedings of the Sixth Southeastern Conference on Combinatorics, Graph Theory and Computing (Florida Atlantic Univ., Boca Raton, Fla., 1975), pp.~531\nobreakdash--539, Congress. Numer., No. XIV, Utilitas Math., Winnipeg, MB, 1975.

\bibitem{Temme-96-book}
N. M. Temme, \emph{Special Functions: An Introduction to Classical Functions of Mathematical Physics}, A Wiley-Interscience Publication, John Wiley \& Sons, Inc., New York, 1996. DOI: \url{https://doi.org/10.1002/9781118032572}.

\bibitem{33}
J. Touchard, \textit{Sur certaines \'equations fontionnelles}, Proc. Int. Math. Congress, Toronto, 1924. \textbf{1} (1928), 465\nobreakdash--472.

\end{thebibliography}
\end{document}